\newcommand*\rel@kern[1]{\kern#1\dimexpr\macc@kerna}
\newcommand*\widebar[1]{%
  \begingroup
  \def\mathaccent##1##2{%
    \rel@kern{0.8}%
    \overline{\rel@kern{-0.8}\macc@nucleus\rel@kern{0.2}}%
    \rel@kern{-0.2}%
  }%
  \macc@depth\@ne
  \let\math@bgroup\@empty \let\math@egroup\macc@set@skewchar
  \mathsurround\z@ \frozen@everymath{\mathgroup\macc@group\relax}%
  \macc@set@skewchar\relax
  \let\mathaccentV\macc@nested@a
  \macc@nested@a\relax111{#1}%
  \endgroup
}
\pgfplotsset{compat=1.3}		
\newcommand\reallywidehat[1]{%
\savestack{\tmpbox}{\stretchto{%
  \scaleto{%
    \scalerel*[\widthof{\ensuremath{#1}}]{\kern-.6pt\bigwedge\kern-.6pt}%
    {\rule[-\textheight/2]{1ex}{\textheight}}
  }{\textheight}%
}{0.5ex}}%
\stackon[1pt]{#1}{\tmpbox}%
}
\newcommand{\RR}{{\mathbb{R}}}
\newcommand{\CC}{{\mathbb{C}}}
\newtheorem{Prop}[theorem]{Proposition}
\newtheorem{num_example}{Example}
\begin{document}
\bibliographystyle{siam}
\title{Matrix equation techniques for certain evolutionary partial differential equations\thanks{Version of \today.}}

\author{Davide Palitta
        }

\institute{ Davide Palitta \at Research Group Computational Methods in Systems
and Control Theory (CSC),
Max Planck Institute for Dynamics of Complex Technical Systems, Sandtorstra\ss{e} 1, 39106 Magdeburg, Germany\\ \email{palitta@mpi-magdeburg.mpg.de}            
}

\date{Received: date / Accepted: date}

\maketitle

\begin{abstract}
We show that the discrete operator stemming from time-space discretization of evolutionary partial differential equations can be represented in terms of a single Sylvester matrix equation. A novel solution strategy that combines projection techniques with the full exploitation of the entry-wise structure of the involved coefficient matrices
is proposed. The resulting scheme is able to efficiently solve problems with a
tremendous number of degrees of freedom while maintaining a low storage demand as illustrated in several numerical examples. 

\keywords{
Evolutionary PDEs \and matrix equations \and Sylvester equations \and projection methods
}

\subclass{65F30 \and 65M22 \and 65M06 \and 93C20 }

\end{abstract}




\section{Introduction}
The numerical treatment of partial differential equations (PDEs) often involves a first discretization phase which yields a discrete operator that needs to be inverted.
In general, if a $d$-dimensional operator on a regular domain is discretized with $n$ nodes in each direction,
a common approach consists in writing the discrete problem as a large linear system
\begin{equation}\label{eq.linear_system}
\mathcal{A}u=f,\quad \mathcal{A}\in\mathbb{R}^{n^d\times n^d}, 
\end{equation}
so that well-established procedures, either direct or iterative, can be employed in the solution process.
However, in many cases, the coefficient matrix $\mathcal{A}$ in \eqref{eq.linear_system} is very structured
and a different formulation of the algebraic problem in terms of a matrix equation can be employed. 
The matrix oriented formulation of the algebraic problems arising from the discretization of certain deterministic and stochastic PDEs is not new. See, e.g., \cite{Starke1991,Wachspress1984,Wachspress1963,Powell2009}.  Nevertheless, only in the last decades the development of efficient solvers for large-scale matrix equations allows for a full exploitation of such reformulation also during the solution phase. See, e.g., \cite{Palitta2016,Kressner2009,DAutilia2019,Stoll2015,Breiten2016,Powell2017},
and \cite{Simoncini2016} for a thorough presentation about solvers for linear matrix equations.

In this paper, we discuss time-dependent PDEs and we show that the aforementioned reformulation in terms of a matrix equation can be performed also for this class of operators. The model problem we have in mind is the heat equation
\begin{equation}\label{heat_eq}
 \begin{array}{rlll}
         u_t&=&\Delta u +f,& \quad \text{in }\Omega\times (0,T],\\
         u&=&g,& \quad \text{on } \partial\Omega,\\
         u(x,0)&=&u_0(x),&
        \end{array}
\end{equation}
where $\Omega\subset\mathbb{R}^d$, $d=1,2,3$, is a regular domain. In particular, we specialize some of our results in the case of tensorized spatial domains of the form $\Omega=\bigotimes_{i=1}^d\Omega_i$ . However, the matrix equation formulation we propose in this paper still holds for more general domains $\Omega$.

We discretize the problem \eqref{heat_eq} in both space and time, and, for sake of simplicity, we assume that a finite difference method with a uniform mesh is employed in the space discretization whereas we apply a backward differentiation formula (BDF) of order $s$, $s=1,\ldots,6$, for the discretization in time.

If an ``all-at-once'' approach is considered, the algebraic problem arising from the discretization of \eqref{heat_eq} amounts to a linear system of the form \eqref{eq.linear_system} with $\mathcal{A}\in\mathbb{R}^{n^d\ell\times n^d\ell}$ where $n$ is the number of nodes employed in each of the $d$ space directions, $d$ is the space dimension and $\ell$ is the number of time steps.
As shown in \cite{McDonald2018}, the $n^d\ell\times n^d\ell$ coefficient matrix $\mathcal{A}$ possesses a Kronecker structure. While in 
\cite{McDonald2018} the authors exploit this Kronecker form to design an effective preconditioner for \eqref{eq.linear_system}, we take advantage of the Kronecker structure to reformulate the algebraic problem in terms of a matrix equation and we show how appropriate projection techniques
can be applied for its efficient solution.

The most common approximation spaces used in the solution of matrix equations by projection are the extended Krylov subspace
\begin{equation}\label{def.extended}
\mathbf{EK}_m^\square(A,B):=\mbox{Range}([B,A^{-1}B,AB,\ldots,A^{m-1}B,A^{-m}B]), \quad A\in\mathbb{R}^{n\times n},B\in\mathbb{R}^{n\times p},p\ll n,
\end{equation}
see, e.g., \cite{Simoncini2007,Knizhnerman2011}, and the more general rational Krylov subspace
\begin{equation}\label{def.rational}
\mathbf{K}_m^\square(A,B,\pmb{\xi}):=\mbox{Range}([B,(A-\xi_2I)^{-1}B,\ldots,\prod_{i=2}^m(A-\xi_iI)^{-1}B]), 
\end{equation}
where $\pmb{\xi}=[\xi_2,\ldots,\xi_m]^T\in\CC^{m-1}$. See, e.g., \cite{Druskin2011a,Druskin2011,Druskin2014}. We thus consider
only these spaces in our analysis.

Here is a synopsis of the paper.
Assuming the backward Euler scheme, i.e., a BDF of order 1, is employed for the time integration, in section~\ref{A matrix equation formulation} we show how the all-at-once approach for the solution of \eqref{heat_eq} leads to a Sylvester matrix equation.
An automatic incorporation of the boundary conditions for the matrix equation formulation is illustrated in section~\ref{Imposing the boundary conditions} while in section~\ref{The extended Krylov subspace method} the efficient solution of the obtained algebraic problem is discussed. In particular, in section~\ref{Left projection} we present the new solution procedure for problems where only the space component of the discrete operator is reduced by projection onto a suitable subspace, i.e., we consider problems where the number of time steps $\ell$ is small, say $\ell=\mathcal{O}(10^3)$.
For $d=2,3$, also the stiffness matrix arising from the discretization of the Laplace operator has a Kronecker structure that can be further exploited in the solution process as illustrated in section~\ref{Structured space operators}. 
At each iteration, the projection technique presented in section~\ref{Left projection}
requires the solution of a reduced equation and this task is one of most expensive parts of the entire procedure, especially for large $\ell$. 
In section~\ref{Efficient inner solves} we illustrate a novel strategy that dramatically decreases the cost of such inner solves.
In section~\ref{Multistep methods} we generalize the approach to the case of generic BDFs of order $s$, $s=1,\ldots,6$.
For the sake of simplicity, only the extended Krylov subspace \eqref{def.extended}
is considered in the discussion presented in section~\ref{The extended Krylov subspace method} but in 
section~\ref{The rational Krylov subspace method} we show how to easily adapt our new strategy when the rational Krylov subspace 
\eqref{def.rational} is adopted as approximation space.
The novel framework we present can be employed in the solution of many different PDEs and in section~\ref{The convection-diffusion equation} we describe the solution process in case of time-dependent convection-diffusion equations.
Several results illustrating the potential of our new methodology are reported in section~\ref{Numerical results} while our conclusions are given in section~\ref{Conclusions}.

Throughout the paper we adopt the following notation.
 The matrix inner product is
defined as $\langle X, Y \rangle_F ∶= \mbox{trace}(Y^T X)$ so that the induced norm is $\|X\|_F^2= \langle X, X\rangle_F$. The Kronecker product is denoted by $\otimes$
while the operator $\text{vec}:\mathbb{R}^{n\times n}\rightarrow\mathbb{R}^{n^2}$ is such that $\text{vec}(X)$ is the vector obtained by stacking the columns of the matrix $X$ one on top of each other.
The identity matrix of order $n$ is denoted by $I_n$. The subscript
is omitted whenever the dimension of $I$ is clear from the context. Moreover, $e_i$ is the $i$-th basis vector of the canonical basis of $\mathbb{R}^n$ while $E_i$ denotes the $i$-th block of $q$ columns of an identity matrix whose dimension depends on the adopted approximation space. More precisely, when the extended Krylov subspace~\eqref{def.extended} is employed, $q=2\cdot p$ while $q=p$ when the rational Krylov subspace~\eqref{def.rational} is selected.   
The brackets $[\cdot]$ are used to concatenate matrices of conforming dimensions. In particular, a Matlab-like notation is adopted and $[M,N]$ denotes the matrix obtained by putting $M$ and $N$ one next to the other.
If $w\in\mathbb{R}^{n}$, $\text{diag}(w)$ denotes the $n\times n$ diagonal matrix whose $i$-th diagonal entry corresponds to the $i$-th component of $w$.

Given a suitable space $\mathcal{K}_m$\footnote{$\mathcal{K}_m$ as in \eqref{def.extended} or \eqref{def.rational}.}, we will always assume that a matrix $V_m\in\RR^{n\times r}$, $\mbox{Range}(V_m)=\mathcal{K}_m$, has orthonormal columns and it is full rank so that $\mbox{dim}(\mathcal{K}_m)=r$. Indeed, if this is not the case, deflation strategies to overcome the possible linear dependence of the spanning vectors can be adopted as it is customary in block Krylov methods. See, e.g., \cite[Section 8]{Gutknecht2006}.

\section{A matrix equation formulation}\label{A matrix equation formulation}
Assuming that the backward Euler scheme is employed for the time integration,
if $\widebar \Omega_h=\{\widebar x_{\mathbf{i}_d}\}$, $\widebar x_{\mathbf{i}_d}\in\mathbb{R}^d$, $\mathbf{i}_d=(i_1,\ldots,i_d)^T\in\mathbb{N}^d$, $i_j=1,\ldots,n$ for all $j=1,\ldots,d$,
denotes a uniform discretization of the closed domain $\widebar\Omega$, with $n$ equidistant points in each of the $d$ spatial dimensions, and
the time interval $[0,T]$ is discretized with $\ell+1$ equidistant nodes $\{t_k\}_{k=0,\ldots,\ell}$, then the discretization of~\eqref{heat_eq} leads to 
\begin{equation}\label{discrete_eq}
 \frac{\mathbf{u}_k-\mathbf{u}_{k-1}}{\tau}+K_d\mathbf{u}_k=\mathbf{f}_k,\quad k=1,\ldots,\ell.
\end{equation}
In \eqref{discrete_eq}, $K_d\in\mathbb{R}^{n^d}$ denotes the stiffness matrix arising from the finite difference discretization of the $d$-dimensional negative laplacian on $\widebar \Omega_h$, $\tau=T/\ell$ is the time-step size, $\mathbf{f}_k\in\mathbb{R}^{n^d}$ collects all the space nodal values of $f$ at time $t_k$, namely $f(x_{\mathbf{i}_d},t_k)$ for all $x_{\mathbf{i}_d}\in\widebar \Omega_h$, together with the boundary conditions, while $\mathbf{u}_k$ gathers the approximations to the space nodal values of the solution $u$ at time $t_k$, i.e., $u(x_{\mathbf{i}_d},t_k)$ for all $x_{\mathbf{i}_d}\in\widebar \Omega_h$\footnote{We assume the entries of both $\mathbf{f}_k$ and $\mathbf{u}_k$ to be sorted following a lexicographic order on the multi-index $\mathbf{i}_d$ for all $k=1,\ldots,\ell$.}.

As shown in \cite{McDonald2018}, rearranging the terms in \eqref{discrete_eq} and applying an all-at-once approach, we get the $n^d\ell\times n^d\ell$ linear systems
\begin{equation}\label{all_at_once_system}
 \underbrace{\begin{bmatrix}
  I_{n^d}+\tau K_d & & &\hspace{-1cm} \\
  -I_{n^d} & I_{n^d}+\tau K_d & &\hspace{-1cm} \\
 \hspace{1cm} \ddots & \hspace{1cm} \ddots & & \hspace{-1cm}\\
   &\hspace{-0.8cm}  -I_{n^d} &\hspace{-0.2cm} I_{n^d}+\tau K_d &\hspace{-1cm}\\
 \end{bmatrix}}_{=:\mathcal{A}}\begin{bmatrix}
 \mathbf{u}_1  \\
 \mathbf{u}_2\\
 \vdots\\
 \mathbf{u}_\ell\\
 \end{bmatrix}=\begin{bmatrix}
 \mathbf{u}_0+\tau \mathbf{f}_1 \\
 \tau \mathbf{f}_2 \\
 \vdots\\
 \tau \mathbf{f}_\ell \\
 \end{bmatrix},
\end{equation}
where $\mathbf{u}_0$ collects the space nodal values of the initial condition $u_0$.

The coefficient matrix $\mathcal{A}$ in \eqref{all_at_once_system} can be written as $\mathcal{A}=I_\ell\otimes(I_{n^d}+\tau K_d)-\Sigma_1\otimes I_{n^d}$ where 
$$\Sigma_1=\begin{bmatrix}
            0 & & & \\
            1 & 0 & & \\
            & \ddots & \ddots & \\
            & & 1 & 0 \\
           \end{bmatrix}\in\mathbb{R}^{\ell\times \ell}.
$$
Therefore, if $\mathbf{U}=[\mathbf{u}_1,\ldots,\mathbf{u}_\ell]\in\mathbb{R}^{n^d\times \ell}$, the linear system \eqref{all_at_once_system} can be reformulated as
\begin{equation}\label{Sylv_eq}
 (I_{n^d}+\tau K)\mathbf{U}-\mathbf{U}\Sigma_1^T=\mathbf{u}_0e_1^T+\tau[\mathbf{f}_1,\ldots,\mathbf{f}_\ell].
\end{equation}
Many numerical methods for the efficient solution of the Sylvester matrix equation \eqref{Sylv_eq} can be found in the literature, see, e.g., \cite{Simoncini2016}, and in section~\ref{The extended Krylov subspace method} we present a procedure based on projection.

In what follows we always assume that the matrix
$[\mathbf{f}_1,\ldots,\mathbf{f}_\ell]$ admits a low-rank representation, namely $[\mathbf{f}_1,\ldots,\mathbf{f}_\ell]= F_1F_2^T$, $F_1\in\mathbb{R}^{n^d\times p}$, $F_2\in\mathbb{R}^{\ell\times p}$, $p\ll\min\{n^d,\ell\}$. Roughly speaking, this can be justified by assuming the functions $f$ and $g$ to be \emph{sufficiently smooth} in time so that $\mathbf{f}_k$ does not differ too much from $\mathbf{f}_{k+1}$ if the time-step size $\tau$ is sufficiently small. More precisely, if $\mathbf{f}_k$ contains entries having an analytic extension in an open elliptic disc with foci $0$ and $T$ for all $k$, then the results in \cite[Lemma 2.2]{Kressner2011}
and \cite[Corollary~2.3]{Kressner2011} can be adapted to
 demonstrate an exponential (superexponential in case of entire function) decay in the singular values of $[\mathbf{f}_1,\ldots,\mathbf{f}_\ell]$. This can be done 
by simply transforming the interval $[-1,1]$ used in \cite[Lemma 2.2]{Kressner2011} in the interval $[0,T]$. With this assumption, equation \eqref{Sylv_eq} can be written as
\begin{equation}\label{Sylv_eq2}
 (I_{n^d}+\tau K_d)\mathbf{U}-\mathbf{U}\Sigma_1^T=[\mathbf{u}_0,F_1][e_1,\tau F_2]^T.
\end{equation}

If a finite element method is employed for the space discretization, also a mass matrix $M$ has to be taken into account and the matrix equation we have to deal with has the form
\begin{equation}\label{GenSylv_eq}
 (M+\tau K_d)\mathbf{U}-M\mathbf{U}\Sigma_1^T=[M\mathbf{u}_0,F_1][e_1,\tau F_2]^T.
\end{equation}
See, e.g., \cite{McDonald2018}. The generalized Sylvester equation \eqref{GenSylv_eq} can be easily transformed into a standard Sylvester equation by premultiplying by $M^{-1}$, see, e.g., \cite[Section 7]{Simoncini2016}, and the procedure we are going to present in section~\ref{The extended Krylov subspace method} can be applied to 
$$(I_{n^d}+\tau M^{-1}K_d)\mathbf{U}-\mathbf{U}\Sigma_1^T=[\mathbf{u}_0,M^{-1}F_1][e_1,\tau F_2]^T.
$$

\section{Imposing the boundary conditions}\label{Imposing the boundary conditions}
Before showing how to efficiently solve equation \eqref{Sylv_eq} by projection, we make a step back and illustrate an automatic procedure for including the boundary conditions in the formulation \eqref{Sylv_eq} in case of tensorized spatial domains. For sake of simplicity, we assume $\Omega=(0,1)^d$.

We first consider $d=1$ in \eqref{heat_eq}. 
 The boundary nodes correspond to the entries of index $i$, $i=1,n$, in each column of $\mathbf{U}$. Denoting by $\mathcal{P}_1$ the operator which selects only the boundary nodes, namely its entries are 1 for indexes corresponding to boundary nodes and 0 otherwise, for 1-dimensional problems we have
$$
\mathcal{P}_1=\begin{bmatrix}
             1 & & & &\\
               & 0 & & &\\
               & & \ddots && \\
               & & & & 0 & \\
               & & & & & 1\\
            \end{bmatrix}=e_1e_1^T+e_ne_n^T.
            $$

The operator $I+\tau K_1$ should act as the  identity operator on the space boundary nodes which means that  
\begin{equation}\label{BC_constrain}
 \mathcal{P}_1(I+\tau K_1)=\mathcal{P}_1.
\end{equation}
Therefore, if we define the matrix
\begin{equation}\label{BC_imposed}
\widebar K_1:=\begin{bmatrix}
             1/\tau & & \\
               & \mathring{K}_1 &\\
                & & 1/\tau\\                     
          \end{bmatrix}\in\mathbb{R}^{n\times n},
\end{equation}
we can consider $I_n-\mathcal{P}_1+\tau
\widebar K_1$ in place of $I_n+\tau
K_1$ as left coefficient matrix in \eqref{Sylv_eq}. In \eqref{BC_imposed}, the matrix $\mathring{K}_1\in\mathbb{R}^{(n-2)\times n}$ corresponds to the discrete operator stemming from the selected finite difference scheme and acting only on the interior of $\widebar \Omega_h$. Different choices with respect to the one in \eqref{BC_imposed} can be considered to meet the constrain~\eqref{BC_constrain}. For instance, we can select $\widetilde K_1:=[\underline{0}^T;\mathring K_1; \underline{0}^T]$, $\underline{0}$ the zero vector of length $n$, and consider $I_n+\tau \widetilde K_1$ as coefficient matrix.  
However, such a $\widetilde K_1$ is not suitable for the solution process we are going to present in section~\ref{The extended Krylov subspace method} due to its singularity and the matrix $\widebar K_1$ in \eqref{BC_imposed} is thus preferred.

We now show how to select the right-hand side in \eqref{Sylv_eq} when the coefficient matrix is as in \eqref{BC_imposed}. We have
$$
 \mathcal{P}_1(I_n-\mathcal{P}_1+\tau
\widebar K_1)\mathbf{U}-\mathcal{P}_1\mathbf{U}\Sigma_1^T=\mathcal{P}_1(\mathbf{u}_0e_1^T+\tau[\mathbf{f}_1,\ldots,\mathbf{f}_\ell]),$$
 so that 
 $$
\begin{bmatrix}
  \mathbf{u}_1(1) & \mathbf{u}_2(1)-\mathbf{u}_1(1) & \cdots & \mathbf{u}_\ell(1)-\mathbf{u}_{\ell-1}(1) \\
  0 & 0 & & 0 \\
  \vdots & \vdots & & \vdots\\
  0 & 0& & 0 \\
  \mathbf{u}_1(n) & \mathbf{u}_2(n)-\mathbf{u}_1(n) & \cdots & \mathbf{u}_\ell(n)-\mathbf{u}_{\ell-1}(n) \\
 \end{bmatrix} = \begin{bmatrix}
  u_0(x_1)+\tau \mathbf{f}_1(1) & \tau \mathbf{f}_2(1) & \cdots & \tau \mathbf{f}_\ell(1) \\
  0 & 0 & & 0 \\
  \vdots & \vdots & & \vdots\\
  0 & 0& & 0 \\
  u_0(x_n)+\tau \mathbf{f}_1(n) & \tau \mathbf{f}_2(n) & \cdots & \tau \mathbf{f}_\ell(n) \\
 \end{bmatrix}.  
$$
Therefore, we can set $\mathbf{f}_1(1)=\mathbf{f}_1(n)=0$ whereas $\mathbf{f}_k(j)=(g(x_j,t_k)-g(x_j,t_{k-1}))/\tau$, $k=2,\ldots,\ell,$ $j=1,n$.

A similar approach can be pursued also for 2- and 3-dimensional problems.
In this cases, following the same ordering of the unknowns proposed in \cite{Palitta2016}, it can be shown that the operator selecting the boundary nodes in $\mathbf{U}$ has the form 
$$ \mathcal{P}_2=\mathcal{P}_1\otimes I_n+(I_n-\mathcal{P}_1)\otimes \mathcal{P}_1,\quad
\mathcal{P}_3=\mathcal{P}_1\otimes I_n\otimes I_n+(I_n-\mathcal{P}_1)\otimes \mathcal{P}_1\otimes I_n+(I_n-\mathcal{P}_1)\otimes(I_n-\mathcal{P}_1)\otimes\mathcal{P}_1,$$
for $d=2,3$ respectively.
 
 It is well-known that also $K_d$ possesses a Kronecker structure. In particular, 
$$K_2=K_1\otimes I_n+I_n\otimes K_1,\quad K_3=K_1\otimes I_n\otimes I_n+I_n\otimes K_1\otimes I_n+I_n\otimes I_n\otimes K_1.$$
The most natural choice for imposing the boundary conditions is thus to select
$$\widebar K_2=\widebar K_1\otimes I_n+I_n\otimes\widebar  K_1,\quad \widebar K_3=\widebar K_1\otimes I_n\otimes I_n+I_n\otimes \widebar K_1\otimes I_n+I_n\otimes I_n\otimes \widebar K_1,$$
and use $I_{n^2}-\mathcal{P}_2+\tau \widebar K_2$ and 
$I_{n^3}-\mathcal{P}_3+\tau \widebar K_3$ as coefficient matrices in \eqref{Sylv_eq}. Notice that $I_{n^d}-\mathcal{P}_d=\bigotimes_{i=1}^d(I_n-\mathcal{P}_1)$.

A direct computation shows that 
\begin{equation}\label{BC2_constrain}
\mathcal{P}_2\left(\bigotimes_{i=1}^2(I_n-\mathcal{P}_1)+\tau \widebar K_2\right)=
\mathcal{P}_2+\mathcal{P}_1\otimes (I_n-\mathcal{P}_1)\widebar K_1+ (I_n-\mathcal{P}_1)\widebar K_1\otimes \mathcal{P}_1=\mathcal{P}_2+\mathcal{L}_2, 
\end{equation}
and
\begin{align}\label{BC3_constrain}
\mathcal{P}_3\left(\bigotimes_{i=1}^3(I_n-\mathcal{P}_1)+\tau \widebar K_3\right)=&
\mathcal{P}_3+\left(\mathcal{P}_1\otimes I_n
\otimes I_n\right)\left(I_n\otimes \widebar K_1 \otimes I_n+I_n\otimes I_n\otimes\widebar K_1\right) \notag\\
& +\left((I_n-\mathcal{P}_1)\otimes \mathcal{P}_1\otimes I_n\right)\left(\widebar K_1\otimes I_n  \otimes I_n+I_n\otimes I_n\otimes\widebar K_1\right)\notag\\
&+
\left((I_n-\mathcal{P}_1)\otimes (I_n-\mathcal{P}_1)\otimes \mathcal{P}_1\right)\left(\widebar K_1\otimes I_n  \otimes I_n+I_n\otimes \widebar K_1\otimes I_n\right)\notag\\
=&\mathcal{P}_3+\mathcal{L}_3.
\end{align}
Therefore the extra terms $\mathcal{L}_2$, $\mathcal{L}_3$ in \eqref{BC2_constrain}-\eqref{BC3_constrain} must be taken into account when constructing the right-hand side $\mathbf{u}_0e_1^T+\tau[\mathbf{f}_1,\ldots,\mathbf{f}_\ell]$, and the relation
$$ \mathcal{P}_d\left(\bigotimes_{i=1}^d(I_n-\mathcal{P}_1)+\tau
\widebar K_d\right)\mathbf{U}-\mathcal{P}_d\mathbf{U}\Sigma_1^T=\mathcal{P}_d(\mathbf{u}_0e_1^T+\tau[\mathbf{f}_1,\ldots,\mathbf{f}_\ell]),\quad d=2,3,$$
i.e.,
$$ \mathcal{P}_d\mathbf{U}+\mathcal{L}_d\mathbf{U}-\mathcal{P}_d\mathbf{U}\Sigma_1^T=\mathcal{P}_d(\mathbf{u}_0e_1^T+\tau[\mathbf{f}_1,\ldots,\mathbf{f}_\ell]),\quad d=2,3,$$
must hold. See, e.g., \cite[Section 3]{Palitta2016} for a similar construction. 

After imposing the boundary conditions and recalling the discussion at the end of section~\ref{A matrix equation formulation}, the Sylvester equation we thus need to solve is
\begin{equation}\label{Sylv_eq3}
 \left(\bigotimes_{i=1}^d(I_n-\mathcal{P}_1)+\tau
\widebar K_d\right)\mathbf{U}-\mathbf{U}\Sigma_1^T=[\mathbf{u}_0,F_1][e_1,\tau F_2],\quad d=1,2,3,
\end{equation}
and in the next section we illustrate its efficient solution by projection.

We would like to underline the fact that if $\Omega$ is a general domain, a more involved procedure has to be adopted to impose the boundary conditions in the matrix equation formulation \eqref{Sylv_eq} in general. Indeed, a more complex geometry may no longer allow for a stiffness matrix $K_d$ that can be written in terms of a Kronecker sum so that the \emph{left} coefficient matrix in the Sylvester equation we end up with may have a different expression than the one in \eqref{Sylv_eq3}. Nonetheless, the solution framework we are going to present in the following sections can be still employed with straightforward modifications.

\section{The extended Krylov subspace method}\label{The extended Krylov subspace method}
In this section we show how to effectively solve equation \eqref{Sylv_eq3} by means of the extended Krylov subspace method. An efficient implementation of this algorithm called K-PIK for large-scale Lyapunov equations can be found in \cite{Simoncini2007} whereas its extension to the solution of Sylvester equations has been proposed in \cite{Breiten2016}. In the next section we suppose that the number $\ell$ of time steps is moderate, say $\ell=\mathcal{O}(10^3)$, so that only a left projection, i.e., a reduction of the space discrete operator, has to be performed. See, e.g., \cite[Section 5.2]{Palitta2018} or \cite[Section 4.3]{Simoncini2016} for some details about projection methods for this problem setting.

In section~\ref{Efficient inner solves} we then suppose that a large number of time steps $\ell$ is employed in the time discretization so that a naive solution of the inner problems stemming from our projection technique is not feasible. By exploiting the structure of $\Sigma_1$ we propose a valid remedy to overcome this numerical issue.

\subsection{Left projection}\label{Left projection}
The extended Krylov subspace method constructs an approximation $U_m=V_mY_m\in\mathbb{R}^{n^d\times \ell}$ where the $2m(p+1)$ columns of $V_m$ form 
an orthonormal basis of the extended Krylov subspace $\mathbf{EK}_m^\square(\widebar K_d,[\mathbf{u}_0,F_1])$, so that $\text{Range}(V_m)=\mathbf{EK}_m^\square(\widebar K_d,[\mathbf{u}_0,F_1])$.
Notice that we use only $\widebar K_d$ in the definition of the space instead of the whole coefficient matrix $\bigotimes_{i=1}^d(I_n-\mathcal{P}_1)+\tau \widebar K_d$. Indeed, all the spectral information about the spatial operator are collected in $\widebar K_d$. See, e.g., \cite{Simoncini2010} for a similar strategy in the context of extended Krylov subspace methods for shifted linear systems.

The basis $V_m=[\mathcal{V}_1,\ldots,\mathcal{V}_m]\in\mathbb{R}^{n^d\times 2m(p+1)}$ can be 
constructed by the extended Arnoldi procedure presented in \cite{Simoncini2007} while
the matrix $Y_m\in\mathbb{R}^{2m(p+1)\times \ell}$ can be computed, e.g., by imposing a Galerkin condition on the residual matrix $R_m:=(\bigotimes_{i=1}^d(I_n-\mathcal{P}_1)+\tau \widebar K_d)U_m-U_m\Sigma_1^T-[\mathbf{u}_0, F_1][e_1,\tau F_2]^T$. This Galerkin condition can be written as 
$$V_m^TR_m=0,$$
so that $Y_m$ is the solution of the reduced Sylvester equation
\begin{equation}\label{projected_eq}
(\mathcal{I}_{m}+\tau T_m)Y_m-Y_m\Sigma_1^T=E_1\pmb{\gamma} [e_1,\tau F_2]^T,
\end{equation}
where $T_m=V_m^T\widebar K_dV_m$, $\mathcal{I}_{m}=V_m^T(\bigotimes_{i=1}^d(I_n-\mathcal{P}_1))V_m$ and $[\mathbf{u}_0, F_1]=V_1\pmb{\gamma}$, $\pmb{\gamma}\in\mathbb{R}^{2(p+1)\times(p+1)}$. In exact arithmetic, the matrix $T_m$ can be cheaply computed by the recursion formulas presented in \cite{Simoncini2007}. However, 
from our numerical experience, computing an explicit projection of $ \widebar K_d$
leads to a better representation of the boundary conditions in the projected problem \eqref{projected_eq}, and thus in the solution $U_m$ as well, in spite of a moderate computational extra cost. The recursion formulas in \cite{Simoncini2007} probably suffers the presence of $1/\tau$ in the definition~\eqref{BC_imposed} of $\widebar K_1$, especially for very small $\tau$.  

An explicit projection has to be performed also to construct $\mathcal{I}_m$. However, the particular structure of $\bigotimes_{i=1}^d(I_n-\mathcal{P}_1)$ makes this task affordable in terms of number of operations.
For instance, if $d=1$, we have
$$\mathcal{I}_{m}=V_m^T(I_n-\mathcal{P}_1)V_m=
V_m^T(I_n-e_1e_1^T-e_ne_n^T)V_m=I_{2m(p+1)}-(V_m^Te_1)(V_m^Te_1)^T-(V_m^Te_n)(V_m^Te_n)^T,
$$
so that only the small matrices $(V_m^Te_1)(V_m^Te_1)^T$, $(V_m^Te_n)(V_m^Te_n)$ have to be computed. Moreover, at the following iteration, $(V_{m+1}^Te_i)=[V_{m}^Te_i; \mathcal{V}_{m+1}^Te_i]$ and this structure can be exploited to further reduce the cost of computing $\mathcal{I}_m$. A similar discussion shows that the computation of $\mathcal{I}_m$ is a minor cost also for $d=2,3$.  
%

Due to its small dimension, equation \eqref{projected_eq} can be solved by means of general-purposed dense solvers for Sylvester equations like the Bartels-Stewart method \cite{Bartels1972} or the Hessenberg-Schur method presented in \cite{Golub1979} which may be particularly appealing in our context due to the lower Hessenberg pattern of $\Sigma_1^T$. See also \cite[Section 3]{Benner2011}. However, the structure of \eqref{projected_eq} allows for a cheaper alternative. If $Y_m=[y_1,\ldots,y_\ell]$,
then equation \eqref{projected_eq} can be written as
$$[(\mathcal{I}_{m}+\tau T_m)y_1,(\mathcal{I}_{m}+\tau T_m)y_2-y_1,\ldots,(\mathcal{I}_{m}+\tau T_m)y_{\ell}-y_{\ell-1}]=E_1\pmb{\gamma} [e_1,\tau F_2]^T.$$
Since $\bigotimes_{i=1}^d(I_n-\mathcal{P}_1)+\tau \widebar K_d$ is positive definite, also $\mathcal{I}_{m}+\tau T_m$ is positive definite and thus invertible for every $m$.
Writing the relation above column-wise we get
\begin{equation}\label{Y_comp}
\begin{array}{rll}
 y_1&=&(\mathcal{I}_{m}+\tau T_m)^{-1}E_1\pmb{\gamma} [e_1,\tau F_2]^Te_1,\\ 
 &&\\
y_j&=&(\mathcal{I}_{m}+\tau T_m)^{-1}\left(E_1\pmb{\gamma} [e_1,\tau F_2]^Te_j+y_{j-1}\right), \quad j=2,\ldots,\ell.
\end{array} 
\end{equation}
This means that the columns of $Y_m$ can be computed by sequentially solving $\ell$ small linear systems with the same coefficient matrix
 $\mathcal{I}_{m}+\tau T_m$ whose factorization can be computed only once at each iteration. 

Once $Y_m$ is computed, it is easy to show that the Frobenius norm of the residual matrix $R_m$ can be cheaply evaluated as
\begin{equation}\label{res_norm_comp}
\|R_m\|_F=\tau\|E_{m+1}^T\underline{T}_mY_m\|_F, 
\end{equation}
where $\underline{T}_m=V_{m+1}^T\widebar K_dV_m$. See, e.g., \cite[Section 5.2]{Palitta2018}.


In Algorithm~\ref{EK_algorithm} the extended Krylov subspace method for equation \eqref{Sylv_eq3} is summarized.

\setcounter{AlgoLine}{0}
\begin{algorithm}
\caption{Extended Krylov subspace method for \eqref{Sylv_eq3} - left projection.\label{EK_algorithm}}
\SetKwInOut{Input}{input}\SetKwInOut{Output}{output}
\Input{$\widebar K_d\in\mathbb{R}^{n^d\times n^d},$ $\Sigma_1\in\mathbb{R}^{\ell\times \ell}$, $\mathbf{u}_0\in\mathbb{R}^{n^d}$, $F_1\in\mathbb{R}^{n^d\times p}$, $F_2\in\mathbb{R}^{\ell\times p}$, $m_{\max}$, $\epsilon>0$, $\tau>0.$}
\Output{$V_m$, $Y_m$ s.t. $U_m=V_mY_m\approx\mathbf{U}$ approximate solution to \eqref{Sylv_eq3}.}
\BlankLine
\nl Compute $\delta=\|[\mathbf{u}_0, F_1][e_1,\tau F_2]^T\|_F$ \label{initial_residual_norm}\\
  \nl  Perform economy-size QR, $[\mathbf{u}_0,F_1,\widebar K_d^{-1}[\mathbf{u}_0,F_1]]=[\mathcal{V}_1^{(1)},\mathcal{V}_1^{(2)}][
  {\pmb \gamma}, {\pmb \theta}]$, ${\pmb \gamma},{\pmb \theta}\in\RR^{2(p+1)\times (p+1)}$\\                                                                                          \nl Set $V_1= [\mathcal{V}_1^{(1)},\mathcal{V}_1^{(2)}]$ \\ 
  \For{$m=1, 2,\dots,$ till $m_{\max}$}{
  \nl Compute next basis block $\mathcal{V}_{m+1}$ as in \cite{Simoncini2007} and set $V_{m+1}=[V_{{m}},\mathcal{V}_{m+1}]$ \\
  \nl  Update $T_{m}=V_{m}^T\widebar K_dV_{m}$ and $\mathcal{I}_{m}=V_m^T\left(\bigotimes_{i=1}^d(I_n-\mathcal{P}_1)\right)V_m$\\
  \nl Compute  $Y_m$ as in \eqref{Y_comp} \\
  \If{$\tau\|E_{m+1}^T\underline{T}_{m}Y_m\|_F\leq\delta\cdot \epsilon$}{ 
\nl \textbf{Return} $V_m$ and $Y_m$  }
}
\end{algorithm}

Notice that the initial residual norm $\|[\mathbf{u}_0, F_1][e_1,\tau F_2]^T\|_F$ in line~\ref{initial_residual_norm} of Algorithm~\ref{EK_algorithm} can be computed at low cost exploiting the properties of the Frobenius norm and the trace operator. Indeed,
$$\begin{array}{rll}
   \delta^2&=& \|[\mathbf{u}_0, F_1][e_1,\tau F_2]^T\|_F^2=\|\mathbf{u}_0e_1^T\|_F^2+\tau^2\|F_1 F_2^T\|_F^2+2\tau\langle \mathbf{u}_0e_1^T,
   F_1 F_2^T\rangle_F\\
   &&\\
   &=&\mathbf{u}_0^T\mathbf{u}_0+\tau^2\cdot\text{trace}((F_1^TF_1)(F_2^TF_2))+2\tau\mathbf{f}_1^T\mathbf{u}_0.
  \end{array}
$$

In many cases the dimension of the final space
$\mathbf{EK}_m^\square(\widebar K_d,[\mathbf{u}_0,F_1])$, namely the number of columns of $V_m$, turns out to be much smaller than $\ell$. See section~\ref{Numerical results}. Therefore, to reduce the memory demand of Algorithm~\ref{EK_algorithm}, we suggest to store only $V_m$ and $Y_m$ and not to explicitly assemble the solution matrix $U_m=V_mY_m\in\mathbb{R}^{n^d\times \ell}$. If desired, one can access to the computed approximation to the solution $u$ at time $t_k$ by simply performing $V_m(Y_me_k)$.

\subsubsection{Structured space operators}\label{Structured space operators}

As already mentioned,
for 2- and 3-space-dimensional problems, i.e., \eqref{heat_eq} with $d=2,3$, also the stiffness matrix $\widebar K_d$ possesses a Kronecker structure. See section~\ref{Imposing the boundary conditions}.

In principle, one can apply the strategy proposed in section~\ref{Left projection} and build the space $\mathbf{EK}_m^\square(\widebar K_d,[\mathbf{u}_0,F_1])$. However, if $u_0$, $f$ and $g$ in \eqref{heat_eq} are separable functions in the space variables, the Kronecker structure of $\widebar K_2$ and $\widebar K_3$ can be exploited in the basis construction. More precisely, only $d$ subspaces of $\mathbb{R}^n$ can be computed instead of one subspace of $\mathbb{R}^{n^d}$ leading to remarkable reductions in both the computational cost and the storage demand of the overall solution process. See, e.g., \cite{Kressner2009}.
The structure we study in this section is sometimes referred to as \emph{Laplace-like}
structure. Such a structure is at the basis of the \emph{tensorized} Krylov approach presented in \cite{Kressner2009} but it has been exploited also in \cite{Mach2011} to derive an ADI iteration tailored to certain high dimensional problems.

We first assume $d=2$ and then extend the approach to the case of $d=3$. If $\widebar\Omega_h$ consists in $n$ equidistant points in each direction $(x_i,y_j)$, $i,j=1,\ldots,n$, and $u_0=\phi_{u_0}(x)\psi_{u_0}(y)$, then we can write
  $$\mathbf{u}_0=\pmb{\phi}_{u_0}\otimes\pmb{\psi}_{u_0},$$
  where $\pmb{\phi}_{u_0}=[\phi_{u_0}(x_1),\ldots,\phi_{u_0}(x_n)]^T$, and $\pmb{\psi}_{u_0}=[\psi_{u_0}(y_1),\ldots,\psi_{u_0}(y_n)]^T$.

  Similarly, if $f=\phi_f(x,t)\psi_f(y,t)$, 
  $g=\phi_g(x,t)\psi_g(y,t)$, a generic column $\mathbf{f}_k$ of the right-hand side in \eqref{Sylv_eq} can be written as
  $$\mathbf{f}_k=\pmb{\phi}_{f,k}\otimes\pmb{\psi}_{f,k}+\pmb{\phi}_{g,k}\otimes\pmb{\psi}_{g,k},$$
  with 
  $$\begin{array}{ll}
  \pmb{\phi}_{f,k}=[\phi_f(x_1,t_k),\ldots,\phi_f(x_n,t_k)]^T,&
  \pmb{\psi}_{f,k}=[\psi_f(y_1,t_k),\ldots,\psi_f(y_n,t_k)]^T,\\\pmb{\phi}_{g,k}=[\phi_g(x_1,t_k),\ldots,\phi_g(x_n,t_k)]^T,&\pmb{\psi}_{g,k}=[\psi_g(y_1,t_k),\ldots,\psi_g(y_n,t_k)]^T.
   \end{array}
$$
  We further assume that the low-rank factorization $[\mathbf{f}_1,\ldots,\mathbf{f}_\ell]= F_1F_2^T$, $F_1\in\mathbb{R}^{n^2\times p}$, $F_2\in\mathbb{R}^{\ell\times p}$, $p\ll\ell$, is such that the separability features of the functions $f$ and $g$ are somehow preserved. In other words, we assume that we can write 
  $$[\mathbf{f}_1,\ldots,\mathbf{f}_\ell]= (\Phi_f\otimes\Psi_f)F_2^T,$$
  where 
  $\Phi_f\in\mathbb{R}^{n\times q}$, $\Psi_f\in\mathbb{R}^{n\times r}$, $qr=p$. Notice that this construction is not hard to meet in practice. See, e.g., section~\ref{Numerical results}.
 
 With the assumptions above, it has been shown in \cite{Kressner2009} how the construction of a \emph{tensorized} Krylov subspace is very convenient. In particular, we can compute the space  
 $\mathbf{EK}_m^\square(\widebar K_1,[\pmb{\phi}_{u_0},\Phi_f])\otimes \mathbf{EK}_m^\square(\widebar K_1,[\pmb{\psi}_{u_0},\Psi_f])$ instead of
 $\mathbf{EK}_m^\square(\widebar K_2,[\mathbf{u}_0,F_1])$.
 
 The construction of $\mathbf{EK}_m^\square(\widebar K_1,[\pmb{\phi}_{u_0},\Phi_f])$, $\mathbf{EK}_m^\square(\widebar K_1,[\pmb{\psi}_{u_0},\Psi_f])$ is very advantageous in terms of both number of operations and memory requirements compared to the computation of $\mathbf{EK}_m^\square(\widebar K_2,[\mathbf{u}_0,F_1])$. For instance, only multiplications and solves with the $n\times n$ matrix $\widebar K_1$ are necessary while the orthogonalization procedures only involves vectors of length $n$. Moreover, at iteration $m$, we need to store the two matrices $Q_m\in\mathbb{R}^{n\times 2m(q+1)}$, $ \text{Range}( Q_m)=\mathbf{EK}_m^\square(\widebar K_1,[\pmb{\phi}_{u_0},\Phi_f])$,  and $W_m\in\mathbb{R}^{n\times 2m(r+1)}$, $\text{Range}(W_m)=\mathbf{EK}_m^\square(\widebar K_1,[\pmb{\psi}_{u_0},\Psi_f])$, so that only $2m(q+r+2)$ vectors of length $n$ are allocated instead of the $2m(p+1)$ vectors of length $n^2$ the storage of $V_m$ requires.
 Moreover, the construction of the bases $W_m$ and $Q_m$ can be carried out in parallel.
 
 Even if we construct the matrices $W_m$ and $Q_m$ instead of $V_m$, the main framework of the extended Krylov subspace method remains the same. We look for an approximate solution of the form $U_m=(W_m\otimes Q_m)Y_m$ where the $4m^2(q+1)(r+1)\times \ell$ matrix $Y_m$ is computed by imposing a Galerkin condition on the residual matrix $R_m=\left(\bigotimes_{i=1}^2(I_n-\mathcal{P}_1)+\tau(\widebar K_1\otimes I_n+I_n\otimes \widebar K_1)\right)(W_m\otimes Q_m)Y_m-(W_m\otimes Q_m)Y_m\Sigma_1^T-[\pmb{\phi}_{u_0}\otimes\pmb{\psi}_{u_0},\Phi\otimes\Psi][e_1,\tau F_2]^T$. Such Galerkin condition can be written as
 $$(W_m^T\otimes Q_m^T)R_m=0,$$
 so that $Y_m$ is the solution of the reduced Sylvester equation 
 \begin{equation}\label{Y_m_2d}
\left(\mathcal{I}_m\otimes\mathcal{J}_m+\tau(T_m\otimes I_{2m(q+1)}+I_{2m(p+1)}\otimes H_m)\right)Y_m -Y_m\Sigma_1^T=(E_1\pmb{\alpha}\otimes E_1\pmb{\beta})[e_1,\tau F_2]^T,  
 \end{equation}
 where $T_m=W_m^T\widebar K_1W_m$, $H_m=Q_m^T\widebar K_1Q_m$,
 $\mathcal{I}_m=W_m^T(I_n-\mathcal{P}_1)W_m$, $\mathcal{J}_m=Q_m^T(I_n-\mathcal{P}_1)Q_m$,
 $[\pmb{\phi}_{u_0},\Phi_f]=Q_1\pmb{\alpha}$, $\pmb{\alpha}\in\mathbb{R}^{2(q+1)\times(q+1)}$ and $[\pmb{\psi}_{u_0},\Psi_f]=W_1\pmb{\beta}$, $\pmb{\beta}\in\mathbb{R}^{2(r+1)\times(r+1)}$.
 
  As before, the $\ell$ columns of $Y_m$ can be computed by solving $\ell$ linear systems with the same coefficient matrix $\mathcal{I}_m\otimes\mathcal{J}_m+\tau(T_m\otimes I_{2m(q+1)}+I_{2m(p+1)}\otimes H_m)$. 
 
The cheap residual norm computation 
\eqref{res_norm_comp} has not a straightforward counterpart of the form $\|R_m\|_F=\tau\|E_{m+1}^T(\underline{T}_m\otimes I_{2m(q+1)}+I_{2m(r+1)}\otimes \underline{H}_m)Y_m \|_F$, $\underline{T}_m=W_{m+1}^T\widebar K_1W_m$, $\underline{H}_m=Q_{m+1}^T\widebar K_1Q_m$,
in our current setting. A different though cheap procedure for computing the residual norm at low cost is derived in the next proposition.
 \begin{Prop}\label{res_norm_comp_structured_op}
  At the $m$-th iteration of the extended Krylov subspace method, the residual matrix $R_m=\left(\bigotimes_{i=1}^2(I_n-\mathcal{P}_1)+\tau(\widebar K_1\otimes I_n+I_n\otimes \widebar K_1)\right)(W_m\otimes Q_m)Y_m-(W_m\otimes Q_m)Y_m\Sigma_1^T-[\pmb{\phi}_{u_0}\otimes\pmb{\psi}_{u_0},\Phi_f\otimes\Psi_f][e_1,\tau F_2]^T$ is such that 
  \begin{equation}\label{res_norm_comp_2d}
   \|R_m\|_F^2=\tau^2\left(\|\left(E_{m+1}^T\underline{T}_m\otimes I_{2m(q+1)}\right)Y_m\|_F^2+\|\left(I_{2m(r+1)}\otimes E_{m+1}^T\underline{H}_m\right)Y_m \|_F^2\right),
  \end{equation}
where $\underline{T}_m:=W_{m+1}^T\widebar K_1W_m$ and 
$\underline{H}_m:=Q_{m+1}^T\widebar K_1Q_m$.
 \end{Prop}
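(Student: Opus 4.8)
The plan is to mirror the one-dimensional residual identity \eqref{res_norm_comp} and then harvest the orthogonality built into the tensorized basis. Throughout write $\mathcal{M}:=\bigotimes_{i=1}^2(I_n-\mathcal{P}_1)+\tau(\widebar K_1\otimes I_n+I_n\otimes \widebar K_1)$ for the left coefficient matrix and $\Pi:=(W_mW_m^T)\otimes(Q_mQ_m^T)$ for the orthogonal projector onto $\mbox{Range}(W_m\otimes Q_m)$. First I would use the Galerkin condition $(W_m^T\otimes Q_m^T)R_m=0$, i.e.\ $\Pi R_m=0$, to write $R_m=(I-\Pi)R_m$. The term $(W_m\otimes Q_m)Y_m\Sigma_1^T$ has columns in $\mbox{Range}(W_m\otimes Q_m)$ and is annihilated by $I-\Pi$, and the same holds for the right-hand side, whose spatial columns $\pmb{\phi}_{u_0}\otimes\pmb{\psi}_{u_0}$ and those of $\Phi_f\otimes\Psi_f$ lie in $\mbox{Range}(W_m\otimes Q_m)$ because the separable factors of $\mathbf{u}_0$ and of $[\mathbf{f}_1,\ldots,\mathbf{f}_\ell]$ generate the two starting blocks ($[\pmb{\phi}_{u_0},\Phi_f]=Q_1\pmb{\alpha}$, $[\pmb{\psi}_{u_0},\Psi_f]=W_1\pmb{\beta}$). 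Hence the residual collapses to $R_m=(I-\Pi)\mathcal{M}(W_m\otimes Q_m)Y_m$.

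Next I would insert the extended Arnoldi relations for the two $n\times n$ bases, $\widebar K_1W_m=W_m T_m+(W_{m+1}E_{m+1})(E_{m+1}^T\underline{T}_m)$ and $\widebar K_1Q_m=Q_m H_m+(Q_{m+1}E_{m+1})(E_{m+1}^T\underline{H}_m)$, and expand $\mathcal{M}(W_m\otimes Q_m)$ term by term with the mixed-product property. The two summands $\tau(\widebar K_1W_m)\otimes Q_m$ and $\tau W_m\otimes(\widebar K_1Q_m)$ each split into an in-space part (proportional to $W_mT_m\otimes Q_m$, resp.\ $W_m\otimes Q_mH_m$) that $I-\Pi$ kills, and an out-of-space part carried by the freshly appended blocks $W_{m+1}E_{m+1}$, resp.\ $Q_{m+1}E_{m+1}$. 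For the remaining term $\bigotimes_{i=1}^2(I_n-\mathcal{P}_1)$, whose action is $((I_n-\mathcal{P}_1)W_m)\otimes((I_n-\mathcal{P}_1)Q_m)$, I would invoke the structural fact underlying \eqref{res_norm_comp}: $(I_n-\mathcal{P}_1)$ leaves each extended Krylov space invariant, so $(I_n-\mathcal{P}_1)W_m=W_m\mathcal{I}_m$ and $(I_n-\mathcal{P}_1)Q_m=Q_m\mathcal{J}_m$; this term is then $(W_m\otimes Q_m)(\mathcal{I}_m\otimes\mathcal{J}_m)$, entirely in $\mbox{Range}(W_m\otimes Q_m)$ and annihilated by $I-\Pi$. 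What survives is exactly
$$R_m=\tau\left[(W_{m+1}E_{m+1}E_{m+1}^T\underline{T}_m)\otimes Q_m\right]Y_m+\tau\left[W_m\otimes(Q_{m+1}E_{m+1}E_{m+1}^T\underline{H}_m)\right]Y_m.$$

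Finally I would exploit orthogonality. The first summand has column range inside $\mbox{Range}(W_{m+1}E_{m+1})\otimes\mbox{Range}(Q_m)$ and the second inside $\mbox{Range}(W_m)\otimes\mbox{Range}(Q_{m+1}E_{m+1})$; since the newly appended block $W_{m+1}E_{m+1}$ is orthogonal to $W_m$ (and $Q_{m+1}E_{m+1}$ to $Q_m$), the Frobenius inner product of the two summands factors through $(W_{m+1}E_{m+1})^TW_m=0$ and vanishes. Pythagoras then gives $\|R_m\|_F^2$ as the sum of the two squared norms. In each summand I would factor out $W_{m+1}E_{m+1}\otimes Q_m$, resp.\ $W_m\otimes Q_{m+1}E_{m+1}$, by the mixed-product property; being isometries in the Frobenius norm these drop out and leave precisely $\tau^2\|(E_{m+1}^T\underline{T}_m\otimes I_{2m(q+1)})Y_m\|_F^2$ and $\tau^2\|(I_{2m(r+1)}\otimes E_{m+1}^T\underline{H}_m)Y_m\|_F^2$, which is \eqref{res_norm_comp_2d}.

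The step I expect to be most delicate is the treatment of the $\bigotimes_{i=1}^2(I_n-\mathcal{P}_1)$ block: one must be certain that $(I_n-\mathcal{P}_1)$ does not push either factor basis out of its own extended Krylov space, so that this term contributes nothing to $R_m$ and the blocks $\mathcal{I}_m,\mathcal{J}_m$ it produces are absorbed by the reduced equation \eqref{Y_m_2d} rather than by the residual. This is the two-dimensional counterpart of the cancellation already present in \eqref{res_norm_comp}. The remaining effort is purely bookkeeping: keeping the Kronecker factor orderings and the paired identity-block sizes $2m(q+1)$ and $2m(r+1)$ straight throughout.
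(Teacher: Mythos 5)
Your proof is correct and follows essentially the same path as the paper's: substitute the two extended Arnoldi relations, cancel everything lying in $\mbox{Range}(W_m\otimes Q_m)$ via the projected equation \eqref{Y_m_2d} — including the $\bigotimes_{i=1}^2(I_n-\mathcal{P}_1)$ term, which the paper likewise identifies with $(W_m\otimes Q_m)(\mathcal{I}_m\otimes\mathcal{J}_m)$, i.e.\ exactly the invariance step you flag as delicate — and then obtain \eqref{res_norm_comp_2d} from the mutual orthogonality of the two surviving summands together with the fact that the orthonormal Kronecker factors are Frobenius isometries. Phrasing the cancellation through the projector $(W_mW_m^T)\otimes(Q_mQ_m^T)$ rather than by direct grouping of terms, as the paper does, is only a cosmetic difference.
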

\begin{proof}
 If $Q_m=[\mathcal{Q}_1,\ldots,\mathcal{Q}_m]$, $\mathcal{Q}_i\in\mathbb{R}^{n\times 2(q+1)}$, $W_m=[\mathcal{W}_1,\ldots,\mathcal{W}_m]$, $\mathcal{W}_i\in\mathbb{R}^{n\times 2(r+1)}$, for the extended Krylov subspaces $\mathbf{EK}_m^\square(\widebar K_1,[\pmb{\phi}_{u_0},\Phi_f]),$ $\mathbf{EK}_m^\square(\widebar K_1,[\pmb{\psi}_{u_0},\Psi_f])$ the Arnoldi relations 
 $$\widebar K_1Q_m=Q_mH_m+\mathcal{Q}_{m+1}E_{m+1}^T\underline{H}_m,$$ 
 and
 $$\widebar K_1W_m=W_mT_m+\mathcal{W}_{m+1}E_{m+1}^T\underline{T}_m,$$ 
 hold. Since $Y_m$ solves \eqref{Y_m_2d}, we have 
 \begin{align*} 
    R_m  = & \left(\bigotimes_{i=1}^2(I_{n}-\mathcal{P}_1)+\tau(\widebar K_1\otimes I_n+I_n\otimes \widebar K_1)\right)(W_m\otimes Q_m)Y_m-(W_m\otimes Q_m)Y_m\Sigma_1^T\\
    &-[\pmb{\phi}_{u_0}\otimes\pmb{\psi}_{u_0},\Phi_f\otimes\Psi_f][e_1,\tau F_2]^T \\
    =& (W_m\otimes Q_m)\left(\left(\mathcal{I}_m\otimes\mathcal{J}_m+\tau(T_m\otimes I_{2m(q+1)}+I_{2m(p+1)}\otimes H_m)\right)Y_m-Y_m\Sigma_1^T-(E_1\pmb{\alpha}\otimes E_1\pmb{\beta})[e_1,\tau F_2]^T\right)\\
    &+\tau\left(\mathcal{W}_{m+1}E_{m+1}^T\underline{T}_m\otimes Q_m+W_m\otimes \mathcal{Q}_{m+1}E_{m+1}^T\underline{H}_m\right)Y_m\\
    =&\tau\left(\mathcal{W}_{m+1}E_{m+1}^T\underline{T}_m\otimes Q_m+W_m\otimes \mathcal{Q}_{m+1}E_{m+1}^T\underline{H}_m\right)Y_m.
    \end{align*}

 Therefore,
 {\small
 $$\begin{array}{rll}
   \| R_m\|_F^2 & = & \tau^2\|\left(\mathcal{W}_{m+1}E_{m+1}^T\underline{T}_m\otimes Q_m+W_m\otimes \mathcal{Q}_{m+1}E_{m+1}^T\underline{H}_m\right)Y_m\|_F^2\\
   &&\\
   &=& \tau^2\left(\|(\mathcal{W}_{m+1}E_{m+1}^T\underline{T}_m\otimes Q_m)Y_m\|_F^2+\|(W_m\otimes \mathcal{Q}_{m+1}E_{m+1}^T\underline{H}_m)Y_m\|_F^2\right.\\
   &&+\left.\langle(\mathcal{W}_{m+1}E_{m+1}^T\underline{T}_m\otimes Q_m)Y_m,(W_m\otimes\mathcal{Q}_{m+1}E_{m+1}^T\underline{H}_m)Y_m\rangle_F\right)
   \\
   &&\\
   &=&\tau^2\left(\|(\mathcal{W}_{m+1}\otimes Q_m)(E_{m+1}^T\underline{T}_m\otimes I_{2m(q+1)})Y_m\|_F^2+\|(W_m\otimes \mathcal{Q}_{m+1})(I_{2m(r+1)}\otimes E_{m+1}^T\underline{H}_m)Y_m\|_F^2\right)\\
   &&\\
   &=&\tau^2\left(\|(E_{m+1}^T\underline{T}_m\otimes I_{2m(q+1)})Y_m\|_F^2+\|(I_{2m(r+1)}\otimes E_{m+1}^T\underline{H}_m)Y_m\|_F^2\right),
   \end{array}
$$}
 where we have exploited the orthogonality of the bases. 
\end{proof}

The variant of Algorithm~\ref{EK_algorithm} that benefits from the separable structure of the data is summarized in Algorithm~\ref{EK_algorithm2}.
 
\setcounter{AlgoLine}{0}
\begin{algorithm}
\caption{Extended Krylov subspace method for \eqref{Sylv_eq3} - left projection, separable data, $d=2$.\label{EK_algorithm2}}
\SetKwInOut{Input}{input}\SetKwInOut{Output}{output}
\Input{$\widebar K_1\in\mathbb{R}^{n\times n},$ $\Sigma_1\in\mathbb{R}^{\ell\times \ell}$, $\pmb{\phi}_{u_0},\pmb{\psi}_{u_0}\in\mathbb{R}^{n}$, $\Phi_f\in\mathbb{R}^{n\times q}$, $\Psi_f\in\mathbb{R}^{n\times r}$, $F_2\in\mathbb{R}^{\ell\times p}$, $m_{\max}$, $\epsilon>0$, $\tau>0.$}
\Output{$Q_m$, $W_m$, $Y_m$ s. t. $U_m=(W_m\otimes Q_m)Y_m\approx\mathbf{U}$ approximate solution to \eqref{Sylv_eq3}.}
\BlankLine
\nl Compute $\delta=\|[\pmb{\phi}_{u_0}\otimes\pmb{\psi}_{u_0}, \Phi_f\otimes\Psi_f][e_1,\tau F_2]^T\|_F$ \label{initial_residual_norm2}\\
  \nl  Perform economy-size QR, $[\pmb{\phi}_{u_0},\Phi_f,\widebar K_1^{-1}[\pmb{\phi}_{u_0},\Phi_f]]=[\mathcal{Q}_1^{(1)},\mathcal{Q}_1^{(2)}][
  {\pmb \alpha}, {\pmb \theta}]$, ${\pmb \alpha},{\pmb \theta}\in\RR^{2(q+1)\times (q+1)}$,
  $[\pmb{\psi}_{u_0},\Psi_f,\widebar K_1^{-1}[\pmb{\psi}_{u_0},\Psi_f]]=[\mathcal{W}_1^{(1)},\mathcal{W}_1^{(2)}][
  {\pmb \beta}, {\pmb \xi}]$, ${\pmb \beta},{\pmb \xi}\in\RR^{2(r+1)\times (r+1)}$  
  \\                                                                                          \nl Set $Q_1= [\mathcal{Q}_1^{(1)},\mathcal{Q}_1^{(2)}]$ and $W_1= [\mathcal{W}_1^{(1)},\mathcal{W}_1^{(2)}]$\\ 
  \For{$m=1, 2,\dots,$ till $m_{\max}$}{
  \nl Compute next basis blocks $\mathcal{Q}_{m+1}$, $\mathcal{W}_{m+1}$ as in \cite{Simoncini2007} and set $Q_{m+1}=[Q_{{m}},\mathcal{Q}_{m+1}]$,
  $W_{m+1}=[W_{{m}},\mathcal{W}_{m+1}]$\\
  \nl  Update $H_{m}=Q_{m}^TK_1Q_{m}$, $T_{m}=W_{m}^TK_1W_{m}$, $\mathcal{I}_m=W_m^T(I_n-\mathcal{P}_1)W_m$ and $\mathcal{J}_m=Q_m^T(I_n-\mathcal{P}_1)Q_m$\\
  \nl Compute  $Y_m$ as in \eqref{Y_m_2d} \\
  \If{$\tau\sqrt{\|\left(E_{m+1}^T\underline{T}_m\otimes I_{2m(q+1)}\right)Y_m\|_F^2+\|\left(I_{2m(r+1)}\otimes E_{m+1}^T\underline{H}_m\right)Y_m \|_F^2}\leq\delta\cdot \epsilon$}{ 
\nl \textbf{Return} $Q_m$, $W_m$ and $Y_m$}
}
\end{algorithm}

Once again, the Frobenius norm $\delta$ at the beginning of Algorithm~\ref{EK_algorithm2} can be cheaply computed by exploiting both the low-rank and the Kronecker structure
of $[\pmb{\phi}_{u_0}\otimes\pmb{\psi}_{u_0}, \Phi_f\otimes\Psi_f][e_1,\tau F_2]^T$.

Having $Q_m$, $W_m$ and $Y_m$ at hand, we can compute the approximation to the solution $u$ at time $t_k$ by performing $\text{vec}(Q_m\widebar Y_{m,k}W_m^T)$ where $\widebar Y_{m,k}\in\mathbb{R}^{2m(q+1)\times 2m(r+1)}$ is such that $\text{vec}(\widebar Y_{m,k})=Y_me_k$.

For 3-space-dimensional problems with separable data we can follow the same approach. If,
$$
\mathbf{u}_0=\pmb{\phi}_{u_0}\otimes\pmb{\psi}_{u_0}\otimes\pmb{\upsilon}_{u_0}, \text{ and } [\mathbf{f}_1,\ldots,\mathbf{f}_\ell]= (\Phi_f\otimes\Psi_f\otimes \Upsilon_f)F_2^T,
$$
then we can compute the subspaces
$\mathbf{EK}_m^\square(\widebar K_1,[\pmb{\phi}_{u_0},\Phi_f])$, $\mathbf{EK}_m^\square(\widebar K_1,[\pmb{\psi}_{u_0},\Psi_f])$
 and $\mathbf{EK}_m^\square(\widebar K_1,[\pmb{\upsilon}_{u_0},\Upsilon_f])$ instead of $\mathbf{EK}_m^\square(\widebar K_3,[\mathbf{u}_0,F_1])$. The derivation of the method follows the same exact steps as before along with straightforward technicalities and we thus omit it here.

\subsection{Efficient inner solves}\label{Efficient inner solves}

One of the computational bottlenecks of Algorithm~\ref{EK_algorithm} is the solution of the inner problems~\eqref{projected_eq}. For large $\ell$, this becomes the most expensive step of the overall solution process. Therefore, especially for problems that require a fine time grid, a more computational appealing alternative to the solution of the $\ell$ linear systems in \eqref{Y_comp} must be sought. 

In principle, one may think to generate a second approximation space in order to reduce also the time component of the discrete operator in \eqref{Sylv_eq3}, in agreement with standard procedures for Sylvester equations. See, e.g., \cite[Section 4.4.1]{Simoncini2016}. However, no extended Krylov subspace can be generated by $\Sigma_1$ due to its singularity. A different option may be to generate the polynomial Krylov subspace $\mathbf{K}_k^\square(\Sigma_1,[e_1,F_2])=\text{Range}\left(\left[[e_1,F_2],\Sigma_1[e_1,F_2],\ldots, \Sigma_1^{k-1}[e_1,F_2]\right]\right)$. Nevertheless, this space is not very informative as $\text{Ker}(\Sigma_1)=\text{span}\{e_1\}$ and the action of $\Sigma_1$ on a vector $v=(v_1,\ldots,v_l)^T\in\mathbb{R}^\ell$ only consists in a permutation of its components of the form $\Sigma_1v=(0,v_1,\ldots,v_{\ell-1})^T$ so that $\Sigma_1^kv=(0,\ldots,0,v_1,\ldots,v_{\ell-k})^T$, $k\leq \ell$.
Alternatively, one can try to apply an ADI iteration tailored to Sylvester equations \cite{Benner2014}. However, the shift selection for the \emph{right} coefficient matrix $\Sigma_1^T$ may be tricky.

The matrix $\Sigma_1$ is such that
\begin{equation}\label{circulant}
 \Sigma_1=C_1-e_1e_\ell^T,\quad C_1=\begin{bmatrix}
            0 & & & 1\\
            1 & 0 & & \\
            & \ddots & \ddots & \\
            & & 1 & 0 \\
           \end{bmatrix}\in\mathbb{R}^{\ell\times\ell}.
\end{equation}
 This relation has been exploited in \cite{McDonald2018} to design an effective preconditioner for \eqref{eq.linear_system}.
 
 We can use \eqref{circulant} to transform equation \eqref{Sylv_eq3} in a \emph{generalized Sylvester equation} of the form
$$
\left(\bigotimes_{i=1}^d(I_n-\mathcal{P}_1)+\tau
\widebar K_d\right)\mathbf{U}-\mathbf{U}C_1^T+\mathbf{U}e_\ell e_1^T=[\mathbf{u}_0,F_1][e_1,\tau F_2],
$$
and the extended Krylov subspace $\mathbf{EK}_k^\square(C_1,[e_1,F_2])$ may be employed in the solution process thanks to the low rank of the term $\mathbf{U}e_\ell e_1^T$ as proposed in \cite{Jarlebring2018}. However, useful spectral information are difficult to generate also in $\mathbf{EK}_k^\square(C_1,[e_1,F_2])$ since $C_1$ is a permutation matrix.

We take advantage of the relation \eqref{circulant} in a different manner.
At each iteration $m$ of Algorithm~\ref{EK_algorithm}, the projected equation~\eqref{Y_comp} can be written as
\begin{equation}\label{Y_m_circulant}
(\mathcal{I}_m+\tau T_m)Y_m-Y_mC_1^T+Y_me_\ell e_1^T=E_1\pmb{\gamma}[e_1,\tau F_2]^T. 
\end{equation}
Since the Krylov space dimension is assumed to be small, we can compute the eigendecomposition of the coefficient matrix $\mathcal{I}_m+\tau T_m$, namely $\mathcal{I}_m+\tau T_m=S_m\Lambda_mS_m^{-1}$, $\Lambda_m=\text{diag}(\lambda_1,\ldots,\lambda_{2m(p+1)})$ whereas,
thanks to its circulant structure, $C_1$ can be diagonalized by the fast Fourier transform (FFT), i.e., $C_1=\mathcal{F}^{-1}\Pi \mathcal{F}$, $\Pi=\text{diag}(\mathcal{F}(C_1e_1))$, where $\mathcal{F}$ denotes the discrete Fourier transform matrix. See, e.g., \cite[Equation (4.7.10)]{Golub2013}.


Pre and postmultiplying equation \eqref{Y_m_circulant} by $S_m^{-1}$ and $\mathcal{F}^{T}$ respectively, we get
\begin{equation}\label{Y_m_circulant2}
\Lambda_m\widetilde Y_m-\widetilde Y_m\Pi+\widetilde Y_m (\mathcal{F}^{-T}e_\ell)(\mathcal{F} e_1)^T=S_m^{-1}E_1\pmb{\gamma}(\mathcal{F}[e_1,\tau F_2])^T, \quad \widetilde Y_m:=S_m^{-1}Y_m\mathcal{F}^T. 
\end{equation}
The Kronecker form of equation \eqref{Y_m_circulant2} is 
$$\left(I_\ell\otimes\Lambda_m -\Pi\otimes I_{2m(p+1)}+(\mathcal{F}e_1\otimes I_{2m(p+1)})(\mathcal{F}^{-T}e_\ell\otimes I_{2m(p+1)})^T\right)\text{vec}(\widetilde Y_m)=\text{vec}(S_m^{-1}E_1\pmb{\gamma}(\mathcal{F}[e_1,\tau F_2])^T).$$
Denoting by $L:=I_\ell\otimes\Lambda_m -\Pi\otimes I_{2m(p+1)}\in\mathbb{R}^{2m(p+1)\ell\times 2m(p+1)\ell}$, $M:=\mathcal{F}e_1\otimes I_{2m(p+1)},$ $N:=\mathcal{F}^{-T}e_\ell\otimes I_{2m(p+1)}\in\mathbb{R}^{2m(p+1)\ell\times 2m(p+1)}$, and applying the Sherman-Morrison-Woodbury formula \cite[Equation (2.1.4)]{Golub2013} we can write
{\small
\begin{equation}\label{Y_m_circulant3}
\text{vec}(\widetilde Y_m)=L^{-1}\text{vec}(S_m^{-1}E_1\pmb{\gamma}(\mathcal{F}[e_1,\tau F_2])^T)-L^{-1}M(I_{2m(p+1)}+N^TL^{-1}M)^{-1}N^TL^{-1}\text{vec}(S_m^{-1}E_1\pmb{\gamma}(\mathcal{F}[e_1,\tau F_2])^T).
\end{equation}
}
With $\widetilde Y_m$ at hand, we can recover $Y_m$ by simply performing $Y_m=S_m\widetilde Y_m \mathcal{F}^{-T}$.

We are thus left with deriving a strategy for the computation of $\widetilde Y_m$ that should not require 
the explicit construction of $L$, $M$ and $N$ to be efficient. In what follows $\odot$ denotes the Hadamard (element-wise) product. 

Denoting by $\mathcal{H}\in\mathbb{R}^{2m(p+1)\times \ell}$ the matrix whose $(i,j)$-th element is given by $1/(\lambda_i-e_j^T(\mathcal{F}(C_1e_1)))$, $i=1,\ldots,2m(p+1)$, $j=1,\ldots,\ell$, since $L$ is diagonal, we can write
$$L^{-1}\text{vec}(S_m^{-1}E_1\pmb{\gamma}(\mathcal{F}[e_1,\tau F_2])^T)=\text{vec}\left(\mathcal{H}\odot\left(S_m^{-1}E_1\pmb{\gamma}(\mathcal{F}[e_1,\tau F_2])^T\right)\right),$$
so that 
$$N^TL^{-1}\text{vec}(S_m^{-1}E_1\pmb{\gamma}(\mathcal{F}[e_1,\tau F_2])^T)=\left(\mathcal{H}\odot\left(S_m^{-1}E_1\pmb{\gamma}(\mathcal{F}[e_1,\tau F_2])^T\right)\right)\mathcal{F}^{-T}e_\ell.
$$

We now have a closer look at the matrix $N^TL^{-1}M$ in \eqref{Y_m_circulant3}. The $(i,j)$-th entry of this matrix can be written as
\begin{align}\label{NLM}
e_i^TN^TL^{-1}Me_j&=e_i^T(\mathcal{F}^{-T}e_\ell\otimes I_{2m(p+1)})^TL^{-1}(\mathcal{F}e_1\otimes I_{2m(p+1)})e_j=\text{vec}(e_ie_\ell^T\mathcal{F}^{-1})^TL^{-1}\text{vec}(e_je_1^T\mathcal{F}^{T})\notag\\
&=\text{vec}(e_ie_\ell^T\mathcal{F}^{-1})^T\text{vec}\left(\mathcal{H}\odot\left(e_je_1^T\mathcal{F}^{T}\right)\right)\notag=\langle \mathcal{H}\odot\left(e_je_1^T\mathcal{F}^{T}\right),e_ie_\ell^T\mathcal{F}^{-1}\rangle_F\\
&=\text{trace}\left(\mathcal{F}^{-T}e_\ell e_i^T\left(
\mathcal{H}\odot\left(e_je_1^T\mathcal{F}^{T}\right)\right)\right)= e_i^T\left(
\mathcal{H}\odot\left(e_je_1^T\mathcal{F}^{T}\right)\right)\mathcal{F}^{-T}e_\ell.
\end{align}
Note the abuse of notation in the derivation above: $e_i,e_j$ denote the canonical basis vectors of $\mathbb{R}^{2m(p+1)}$ whereas $e_1,e_\ell$ the ones of $\mathbb{R}^{\ell}$.

An important property of the Hadamard product says that for any real vectors $x,y$ and matrices $A,B$ of conforming dimensions, we can write $x^T(A\odot B)y=\text{trace}(\text{diag}(x)A\text{diag}(y)B^T)$. By applying this result to~\eqref{NLM}, we get
\begin{align}\label{NLM2}
e_i^TN^TL^{-1}Me_j&=\text{trace}\left( \text{diag}(e_i)\mathcal{H}\text{diag}(\mathcal{F}^{-T}e_\ell)\mathcal{F}e_1e_j^T\right)=e_j^T\text{diag}(e_i)\mathcal{H}\left(\mathcal{F}^{-T}e_\ell\odot\mathcal{F}e_1\right)\notag\\
&=e_j^Te_ie_i^T\mathcal{H}\left(\mathcal{F}^{-T}e_\ell\odot\mathcal{F}e_1\right)=\delta_{i,j}e_i^T\mathcal{H}\left(\mathcal{F}^{-T}e_\ell\odot\mathcal{F}e_1\right),
\end{align}
where $\delta_{i,j}$ denotes the Kronecker delta, i.e., $\delta_{i,i}=1$ and $\delta_{i,j}=0$ otherwise. Equation \eqref{NLM2} says that 
$N^TL^{-1}M$ is a diagonal matrix such that 
$N^TL^{-1}M=\text{diag}\left(\mathcal{H}\left(\mathcal{F}^{-T}e_\ell\odot\mathcal{F}e_1\right)\right)$.

The vector $w:=M(I_{2m(p+1)}+N^TL^{-1}M)^{-1}N^TL^{-1}\text{vec}(S_m^{-1}E_1\pmb{\gamma}(\mathcal{F}[e_1,\tau F_2])^T)$ in \eqref{Y_m_circulant2} can thus be computed by performing
$$ w=\text{vec}\left(\left(\left(I_{2m(p+1)}+\text{diag}\left(\mathcal{H}\left(\mathcal{F}^{-T}e_\ell\odot\mathcal{F}e_1\right)\right)\right)^{-1}\left(\mathcal{H}\odot\left(S_m^{-1}E_1\pmb{\gamma}(\mathcal{F}[e_1,\tau F_2])^T\right)\right)\mathcal{F}^{-T}e_\ell\right)e_1^T\mathcal{F}^T\right).
$$
The linear solve $L^{-1}w$ can be still carried out by exploiting the Hadamard product and the matrix $\mathcal{H}$ as
{\small
$$L^{-1}w=\text{vec}\left(\mathcal{H}\odot\left(\left(\left(I_{2m(p+1)}+\text{diag}\left(\mathcal{H}\left(\mathcal{F}^{-T}e_\ell\odot\mathcal{F}e_1\right)\right)\right)^{-1}\left(\mathcal{H}\odot\left(S_m^{-1}E_1\pmb{\gamma}(\mathcal{F}[e_1,\tau F_2])^T\right)\right)\mathcal{F}^{-T}e_\ell\right)e_1^T\mathcal{F}^T\right)\right).$$
}
To conclude, the matrix $Y_m$ can be computed by
\begin{equation}\label{efficient_Ym}
 Y_m=S_m(Z-W)\mathcal{F}^{-T},\; \text{where }
 \begin{array}{l}
 Z=\mathcal{H}\odot\left(S_m^{-1}E_1\pmb{\gamma}(\mathcal{F}[e_1,\tau F_2])^T\right), \\ 
  W=\mathcal{H}\odot\left(\left(\left(I_{2m(p+1)}+\text{diag}\left(\mathcal{H}\left(\mathcal{F}^{-T}e_\ell\odot\mathcal{F}e_1\right)\right)\right)^{-1}Z\mathcal{F}^{-T}e_\ell\right)e_1^T\mathcal{F}^T\right),
 \end{array}
\end{equation}
and no Kronecker products are involved in such a computation.

The computation of $Y_m$ by \eqref{efficient_Ym} requires $\mathcal{O}\left(8m^3(p+1)^3+(\log\ell+4m^2(p+1)^2)\ell\right)$ floating point operations (flops) that has to be compared with the
$\mathcal{O}\left(8m^3(p+1)^3+4m^2(p+1)^2\ell\right)$ flops needed to calculate $Y_m$ by~\eqref{Y_comp}. Even though the presence of the FFT makes the asymptotic cost of \eqref{efficient_Ym} slightly larger than the one of \eqref{Y_comp}, performing \eqref{efficient_Ym} is usually much faster than \eqref{Y_comp} in terms of actual computational time. 
Indeed, no {\tt for} loops are required in \eqref{efficient_Ym} while efficient BLAS 3 operations can be exploited. Moreover, many of the computations involving the FFT can be performed once and for all at the beginning of the iterative process.

The discrete Fourier transform matrix $\mathcal{F}$ is never explicitly assembled and in all the experiments reported in section~\ref{Numerical results} its action
and the action of its inverse have been performed by means of the Matlab function {\tt fft} and {\tt ifft} respectively.
 
We would like to point out that the novel strategy presented in this section can be applied as a direct solver to equation~\eqref{Sylv_eq3} whenever the eigendecomposition of $\bigotimes_{i=1}^d(I_n-\mathcal{P}_1)+\tau\widebar K_d$ can be computed, e.g., if~\eqref{heat_eq} is discretized on a coarse spatial grid or if this matrix can be cheaply diagonalized by, e.g., sine transforms as considered in \cite{McDonald2018}.

\subsection{Multistep methods}\label{Multistep methods}
If a BDF of order $s$ is employed for the time discretization, with the same notation of section~\ref{A matrix equation formulation}, equation~\eqref{discrete_eq} has to be replaced by
\begin{equation}\label{discrete_eq_BDF}
 \frac{\mathbf{u}_k-\sum_{j=1}^s\alpha_j\mathbf{u}_{k-j}}{\tau\beta}+K_d\mathbf{u}_k=\mathbf{f}_k,
\end{equation}
where $\alpha_j=\alpha_j(s)$, $\beta=\beta(s)\in\mathbb{R}$ are the coefficients defining the selected BDF. See Table~\ref{BDF_coeff}\footnote{To have a consistent notation in the equations \eqref{Sylv_eq_BDF} and \eqref{Sylv_eq3}, we have changed sign to the $\alpha_j$'s with respect to the values listed in \cite[Table 5.3]{Ascher1998}.}.
It has been proved that for $s>6$ the BDFs become unstable, see, e.g., \cite[Section 5.2.3]{Ascher1998}, and we thus restrict ourselves to the case of $s\leq6$.

 \begin{table}[!ht]
 \centering
 \caption{Coefficients for the BDF of order $s$ for $s\leq 6$. See, e.g., \protect\cite[Table 5.3]{Ascher1998}. \label{BDF_coeff}}
\begin{tabular}{|r| r r r r r r r|}
 \hline
 $s$ & $\beta$ & $\alpha_1$ & $\alpha_2$ & $\alpha_3$ & $\alpha_4$ & $\alpha_5$ & $\alpha_6$  \\
 \hline
 \hline
 1 & 1 & 1 & &&&&\\
 2 & 2/3 & 4/3 & -1/3 &&&&\\
 3 & 6/11 & 18/11 & -9/11 & 2/11&&&\\
 4 & 12/25 & 48/25 & -36/25 &  16/25& -3/25&&\\
 5 & 60/137 & 300/137 & -300/137 & 200/137& -75/137& 12/137&\\
 6 & 60/147 & 360/147 & -450/147 & 400/147& -225/147& 72/147& -10/147\\
 \hline 
\end{tabular}
 \end{table}

 Following the discussion of section~\ref{A matrix equation formulation}, the discrete problem coming from an all-at-once approach for \eqref{discrete_eq_BDF} can be formulated in terms of the following Sylvester equation
 \begin{equation}\label{Sylv_eq_BDF}
  (I+\tau\beta K_d)\mathbf{U}-\mathbf{U}\left(\sum_{j=1}^s\alpha_j\Sigma_j^T\right)=\mathbf{u}_0e_1^T+\tau\beta[\mathbf{f}_1,\ldots,\mathbf{f}_\ell],
 \end{equation}
where $\Sigma_j$ denotes the $\ell\times\ell$ zero matrix having ones only in the $j$-th subdiagonal.
 
 We still assume that the right-hand side in \eqref{Sylv_eq_BDF} admits a low-rank representation. In particular,
 $[\mathbf{f}_1,\ldots,\mathbf{f}_\ell] =F_1F_2^T$. Noticing that the boundary conditions can be imposed as described in section~\ref{Imposing the boundary conditions} provided
$$
\widebar K_1=\begin{bmatrix}
             1/(\tau\beta) & & \\
               & \mathring{K}_1 &\\
                & & 1/(\tau\beta)\\                     
          \end{bmatrix},
$$
 the matrix equation we need to solve has the form 
\begin{equation}\label{Sylv_eq_BDF2}
 \left(\bigotimes_{i=1}^d(I_n-\mathcal{P}_1)+\tau\beta
\widebar K_d\right)\mathbf{U}-\mathbf{U}\left(\sum_{j=1}^s\alpha_j\Sigma_j^T\right)=[\mathbf{u}_0,F_1][e_1,\tau\beta F_2]^T,\quad d=1,2,3.
\end{equation}

The left projection for the space operator can be still carried out as illustrated in section~\ref{Left projection} and the employment of a BDF of order $s$, $1<s\leq 6$, only affects the inner problem formulation. Equation \eqref{projected_eq} must be replaced by 
\begin{equation}\label{projected_eq_BDF}
(\mathcal{I}_{m}+\tau\beta T_m)Y_m-Y_m\left(\sum_{j=1}^s\alpha_j\Sigma_j^T\right)=E_1\pmb{\gamma} [e_1,\tau\beta F_2]^T.
\end{equation}
Once $Y_m$ is computed, the residual norm
can be cheaply evaluated by
$$\|R_m\|_F=\tau\beta\|E_{m+1}^T\underline{T}_mY_m\|_F.
$$

As in the case of $s=1$, the solution of equation~\eqref{projected_eq_BDF} may be very expensive, especially for large $\ell$, and an efficient procedure for the calculation of $Y_m$ is thus necessary.
The solution scheme we are going to derive takes inspiration from the method discussed in section~\ref{Efficient inner solves}.
Indeed, we observe that 
\begin{equation}\label{circulant_BDF}
\sum_{j=1}^s\alpha_j\Sigma_j=C_s-[e_1,\ldots,e_s]\pmb{\alpha}_s[e_{\ell-s+1},\ldots,e_\ell]^T
 ,\quad 
  \pmb{\alpha}_s=\begin{bmatrix}
   \alpha_s & \cdots& &\cdots&\alpha_1 \\
   & \alpha_s &\cdots& \cdots& \alpha_2 \\
   & & \ddots &  & \vdots \\
   & & & \alpha_{s} & \alpha_{s-1}\\
   & & & & \alpha_s\\                                                                                                                 \end{bmatrix}\in\mathbb{R}^{s\times s},
\end{equation}
 where $C_s\in\mathbb{R}^{\ell\times\ell}$ is circulant and can be thus diagonalized by the FFT, namely $C_s=\mathcal{F}^{-1}\Pi_s\mathcal{F}$, $\Pi_s=\text{diag}(\mathcal{F}(C_se_1))$.
 Following section~\ref{Efficient inner solves}, we can write 
 {\small
 $$\text{vec}(\widetilde Y_m)=L^{-1}\text{vec}(S_m^{-1}E_1\pmb{\gamma}(\mathcal{F}[e_1,\tau\beta F_2])^T)-L^{-1}M(I_{2ms(p+1)}+N^TL^{-1}M)^{-1}N^TL^{-1}\text{vec}(S_m^{-1}E_1\pmb{\gamma}(\mathcal{F}[e_1,\tau\beta F_2])^T),$$
}
where now $L:=I_\ell\otimes\Lambda_m-\Pi_s\otimes I_{2m(p+1)}\in\mathbb{R}^{2m(p+1)\times 2m(p+1)}$ and $M:=\mathcal{F}[e_1,\ldots,e_s]\otimes I_{2m(p+1)},$ $N:=\mathcal{F}^{-T}[e_{\ell-s+1},\ldots,e_\ell]\pmb{\alpha}_s^T\otimes I_{2m(p+1)}\in\mathbb{R}^{2m(p+1)\ell\times 2ms(p+1)}$. As before, the action of $L^{-1}$ can be carried out by exploiting the matrix $\mathcal{H}$ and the Hadamard product. In particular,
$$L^{-1}\text{vec}(S_m^{-1}E_1\pmb{\gamma}(\mathcal{F}[e_1,\tau\beta F_2])^T)=\text{vec}\left(\mathcal{H}\odot 
\left(S_m^{-1}E_1\pmb{\gamma}(\mathcal{F}[e_1,\tau\beta F_2])^T\right)\right),$$
and
$$N^TL^{-1}\text{vec}(S_m^{-1}E_1\pmb{\gamma}(\mathcal{F}[e_1,\tau\beta F_2])^T)=\text{vec}\left(\left(\mathcal{H}\odot 
\left(S_m^{-1}E_1\pmb{\gamma}(\mathcal{F}[e_1,\tau\beta F_2])^T\right)\right)\mathcal{F}^{-T}[e_{\ell-s+1},\ldots,e_\ell]\pmb{\alpha}_s^T\right).$$

The inspection of the entries of the matrix $N^TL^{-1}M\in\mathbb{R}^{2ms(p+1)
\times 2ms(p+1)}$ is a bit more involved than before. With abuse of notation, we start by recalling that the vector $e_j\in\mathbb{R}^{2ms(p+1)}$, $j=1,\ldots, 2ms(p+1)$, can be written as 
$e_j=\text{vec}(e_ke_h^T)$, $e_k\in\mathbb{R}^{2m(p+1)}$, $e_h\in\mathbb{R}^{s}$, $j=k+2m(p+1)\cdot(h-1)$. Therefore,
\begin{align*}
 e_i^TN^TL^{-1}Me_j&=\text{vec}(e_re_q^T)^TN^TL^{-1}M\text{vec}(e_ke_h^T)\\
 &=\text{vec}(e_re_q^T\pmb{\alpha}_s[e_{\ell-s+1},\ldots,e_\ell]^T\mathcal{F}^{-1})^TL^{-1}\text{vec}(e_ke_h^T[e_1,\ldots,e_s]^T\mathcal{F}^T)\\
 &=\text{vec}(e_re_q^T\pmb{\alpha}_s[e_{\ell-s+1},\ldots,e_\ell]^T\mathcal{F}^{-1})^T\text{vec}\left(\mathcal{H}\odot \left(e_ke_h^T\mathcal{F}^T\right)\right)\\
 &=\left\langle \mathcal{H}\odot \left(e_ke_h^T\mathcal{F}^T\right), e_re_q^T\pmb{\alpha}_s[e_{\ell-s+1},\ldots,e_\ell]^T\mathcal{F}^{-1}\right\rangle_F \\
 &=\text{trace}\left(\mathcal{F}^{-T}
[e_{\ell-s+1},\ldots,e_\ell]\pmb{\alpha}_s^T
e_qe_r^T\left(\mathcal{H}\odot \left(e_ke_h^T\mathcal{F}^T\right)\right)\right)\\
&=e_r^T\left(\mathcal{H}\odot \left(e_ke_h^T\mathcal{F}^T\right)\right)
\mathcal{F}^{-T}
[e_{\ell-s+1},\ldots,e_\ell]\pmb{\alpha}_s^T
e_q.
\end{align*}
Notice that in the second step above we have $e_h^T[e_1,\ldots,e_s]^T=e_h^T$ and, differently from the one in the left-hand side where $e_h\in\mathbb{R}^s$, the vector in the right-hand side denotes the $h$-th canonical basis vector of $\mathbb{R}^\ell$, $h=1,\ldots,s$.

By exploiting the same property of the Hadamard product used in the derivation presented in section~\ref{Efficient inner solves}, we have
\begin{align}\label{SMW_BDF}
 e_i^TN^TL^{-1}Me_j&=\text{trace}\left(\text{diag}(e_r)\mathcal{H}\text{diag}(\mathcal{F}^{-T}
[e_{\ell-s+1},\ldots,e_\ell]\pmb{\alpha}_s^Te_q)\mathcal{F}e_he_k^T\right)\notag\\
&=e_k^T\text{diag}(e_r)\mathcal{H}\left(\mathcal{F}^{-T}
[e_{\ell-s+1},\ldots,e_\ell]\pmb{\alpha}_s^Te_q)\odot \mathcal{F}e_h\right)\notag\\
&=\delta_{k,r}e_r^T\mathcal{H}\left(\mathcal{F}^{-T}
[e_{\ell-s+1},\ldots,e_\ell]\pmb{\alpha}_s^Te_q)\odot \mathcal{F}e_h\right).
 \end{align}
Recalling that the indices in the above expression are such that $i=r+2m(p+1)\cdot(q-1)$ and $j=k+2m(p+1)\cdot(h-1)$, the relation in \eqref{SMW_BDF} means that $N^TL^{-1}M$ is a $s\times s$ block matrix with blocks of size $2m(p+1)$ which are all diagonal. The $(q,h)$-th block of $N^TL^{-1}M$ is given by $\text{diag}\left(\mathcal{H}\left(\mathcal{F}^{-T}
[e_{\ell-s+1},\ldots,e_\ell]\pmb{\alpha}_s^Te_q)\odot \mathcal{F}e_h\right)\right)$.

If $\mathcal{S}:=I+N^TL^{-1}M$ and $Z:=\mathcal{H}\odot 
\left(S_m^{-1}E_1\pmb{\gamma}(\mathcal{F}[e_1,\tau\beta F_2])^T\right)$,
then we denote by $P$ the $2m(p-1)\times s$ matrix such that 
$\text{vec}(P)=\mathcal{S}^{-1}\text{vec}\left(Z\mathcal{F}^{-T}[e_{\ell-s+1},\ldots,e_\ell]\pmb{\alpha}_s^T\right)$ and,
to conclude, the solution $Y_m$ of the reduced problems \eqref{projected_eq_BDF} can be computed by 
\begin{equation}\label{efficient_Ym_BDF}
 Y_m=S_m(Z-W)\mathcal{F}^{-T},\; \text{where }
 \begin{array}{l}
 Z=\mathcal{H}\odot\left(S_m^{-1}E_1\pmb{\gamma}(\mathcal{F}[e_1,\tau\beta  F_2])^T\right), \\ 
  W=\mathcal{H}\odot\left(P [e_1,\ldots,e_s]^T\mathcal{F}^T\right).
 \end{array}
\end{equation}

A generic BDF of order $s$, $s\leq 6$, requires $s-1$ additional initial values $\mathbf{u}_1,\ldots,\mathbf{u}_{s-1}$
together with $\mathbf{u}_0$. 
If these values are known, we have to simply change the right-hand side in \eqref{Sylv_eq_BDF2} and consider 
$$[\sum_{j=1}^s\alpha_j \mathbf{u}_{s-j},\sum_{j=1}^{s-1}\alpha_{j+1} \mathbf{u}_{s-j}, \ldots,\alpha_s \mathbf{u}_{s-1}, F_1][e_1,e_2,\ldots,e_s,\tau\beta F_2]^T,$$
in place of 
$[\mathbf{u}_0,F_1][e_1,\tau\beta F_2]^T$. Therefore, we need to construct the space 
$$\mathbf{EK}_m^\square(\widebar K_d,[\sum_{j=1}^s\alpha_j \mathbf{u}_{s-j},\sum_{j=1}^{s-1}\alpha_{j+1} \mathbf{u}_{s-j}, \ldots,\alpha_s \mathbf{u}_{s-1}, F_1]).$$
Except for the fact that now $2(p+s)$ basis vectors are added to the computed space at each iteration, the main steps of the solution method remain the same. See Example~\ref{Ex.1}.

If $\mathbf{u}_1,\ldots,\mathbf{u}_{s-1}$ are not given, they must be carefully approximated and such a computation must be $\mathcal{O}(\tau^s)$ accurate to maintain the full convergence order of the method. In standard implementation of BDFs, the $k$-th initial value $\mathbf{u}_k$, $k=1,\ldots,s-1$, is computed by a BDF of order $k$ with a time-step $\tau_k$, $\tau_k\leq\tau$. See, e.g., \cite[Section 5.1.3]{Ascher1998}. Allowing for a variable time-stepping is crucial for preserving the convergence order of the method.

The solution scheme presented in this paper is designed for a uniform time grid and it is not able to automatically handle a variable time-stepping.
Therefore, even though the solution process is illustrated for a generic BDF of order $s\leq 6$, in the experiments reported in section~\ref{Numerical results} we make use of the implicit Euler scheme for the time discretization when the additional initial values $\mathbf{u}_1,\ldots,\mathbf{u}_{s-1}$ are not provided.

The generalization of the proposed algorithm to the case of variable, and more in general, adaptive time-stepping will be the topic of future works.

\section{The rational Krylov subspace method}\label{The rational Krylov subspace method}

In section~\ref{The extended Krylov subspace method} we have considered only the extended Krylov subspace for the projection of the discrete space operator. However, the framework presented in section~\ref{Left projection} can be easily adapted to handle different approximation spaces as, e.g., the rational Krylov subspace~\eqref{def.rational}.

If we need to solve equation~\eqref{Sylv_eq3}, we can construct the rational Krylov subspace $\mathbf{K}_m^\square(\widebar K_d,[\mathbf{u}_0,F_1],\pmb{\xi})=\text{Range}(V_m)$, $V_m=[\mathcal{V}_1,\ldots,\mathcal{V}_m]\in\mathbb{R}^{n\times m(p+1)}$, $\pmb{\xi}=(\xi_2,\ldots,\xi_m)^T\in\mathbb{C}^{m-1}$, and perform a left projection as illustrated in section~\ref{Left projection}. 
Therefore, we still look for an approximate solution $U_m$ of the form $U_m=V_mY_m$ where $Y_m\in\mathbb{R}^{m(p+1)\times m(p+1)}$ is computed by imposing a Galerkin condition on the residual matrix $R_m:=(\bigotimes_{i=1}^d(I-\mathcal{P}_1)+\tau \widebar K_d)V_mY_m-V_mY_m\Sigma_1^T-[\mathbf{u}_0,F_1][e_1,\tau F_2]^T$, i.e., we impose $V_m^TR_m=0$. Once again, this orthogonality condition is equivalent to computing $Y_m$ as the solution of the projected equation 
$$(\mathcal{I}_m+\tau T_m)Y_m-Y_m\Sigma_1^T=E_1\pmb{\gamma}[e_1,\tau F_2]^T,$$
where, as before, $T_m=V_m^T\widebar K_dV_m$ and $\mathcal{I}_m=V_m^T\left(\bigotimes_{i=1}^d(I-\mathcal{P}_1)\right)V_m$.
Also when the rational Krylov subspace
is selected as approximation space we perform an explicit projection to obtain $T_m$ and $\mathcal{I}_m$ although, in exact arithmetic, the matrix $T_m$ can be computed by exploiting the results in \cite[Proposition 4.1]{Druskin2011}.   The solution $Y_m$ to the reduced equation can be still calculated by \eqref{efficient_Ym}.

Even though the main framework is similar to the one derived in section~\ref{The extended Krylov subspace method}, the employment of a rational Krylov subspace requires the careful implementation of certain technical aspects that we are going to discuss in the following.

The basis $V_m$ can be computed by an Arnoldi-like procedure as illustrated in \cite[Section 2]{Druskin2011} and it is well-known how the quality of the computed rational Krylov subspace deeply depends on the choice of the shifts $\pmb{\xi}$ employed in the basis construction. Effective shifts can be computed at the beginning of the iterative method if, e.g., 
some additional informations about the problem of interest are known. In practice, the shifts can be adaptively computed on the fly and the strategy presented in \cite{Druskin2011} can be employed to calculate the $(m+1)$-th shift $\xi_{m+1}$. The adaptive procedure proposed by Druskin and Simoncini in  
\cite{Druskin2011} only requires rough estimates of the smallest and largest eigenvalues of $\widebar K_d$ together with the Ritz values, i.e., the eigenvalues of the projected matrix $T_m$, that can be efficiently computed in $\mathcal{O}(m^3(p+1)^3)$ flops. In all the examples reported in section~\ref{Numerical results} such a scheme is adopted for the shifts computation.

For the rational Krylov subspace, the residual norm cannot be computed by performing \eqref{res_norm_comp} as an Arnoldi relation of the form 
$$\widebar K_dV_m=V_mT_m+\mathcal{V}_{m+1}E_{m+1}^T\underline{T}_m,$$ 
does not hold. 
An alternative but still cheap residual norm computation is derived in the next proposition.
\begin{Prop}\label{res_norm_comp_rational}
 At the $m$-th iteration of the rational Krylov subspace method, the residual matrix $R_m=(\bigotimes_{i=1}^d(I-\mathcal{P}_1)+\tau \widebar K_d)V_mY_m-V_mY_m\Sigma_1^T-[\mathbf{u}_0,F_1][e_1,\tau F_2]^T$ is such that
 $$\|R_m\|_F=\tau\left\| \left(\xi_{ m+1}I-(I-V_{m}V_{m}^T)\widebar K_d\right)\mathcal{V}_{m+1}E^T_{m+1}\underline{H}_mH_{m}^{-1} Y_m\right\|_F,
 $$
where the matrix $\underline{H}_{m}\in\mathbb{R}^{(m+1)\cdot(p+1)\times m(p+1)}$
collects the orthonormalization coefficients stemming from the ``rational'' Arnoldi procedure and $H_{m}\in\mathbb{R}^{m(p+1)\times  m(p+1)}$
is its principal square submatrix.
\end{Prop}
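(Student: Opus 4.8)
The plan is to reduce $R_m$ via the Galerkin condition and then replace the action of $\widebar K_d$ on $V_m$ by the rational Arnoldi relation. First I would use that $Y_m$ solves the projected equation $(\mathcal I_m+\tau T_m)Y_m-Y_m\Sigma_1^T=E_1\pmb{\gamma}[e_1,\tau F_2]^T$, which is precisely the Galerkin orthogonality $V_m^TR_m=0$. Hence $R_m=(I-V_mV_m^T)R_m$, and since both $V_mY_m\Sigma_1^T$ and $[\mathbf{u}_0,F_1][e_1,\tau F_2]^T=V_1\pmb{\gamma}[e_1,\tau F_2]^T$ lie in $\mathrm{Range}(V_m)$, they are annihilated by $I-V_mV_m^T$. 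This leaves $R_m=(I-V_mV_m^T)\left(\bigotimes_{i=1}^d(I-\mathcal P_1)+\tau\widebar K_d\right)V_mY_m$.

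Next I would discard the boundary part: exactly as in \eqref{res_norm_comp} and in Proposition~\ref{res_norm_comp_structured_op}, one has $\left(\bigotimes_{i=1}^d(I-\mathcal P_1)\right)V_m\in\mathrm{Range}(V_m)$, so $(I-V_mV_m^T)\bigotimes_{i=1}^d(I-\mathcal P_1)V_m=0$. This rests on $\mathcal P_1\widebar K_1=\tau^{-1}\mathcal P_1$ together with $\mathcal P_1(\widebar K_1-\xi I)^{-1}=(\tau^{-1}-\xi)^{-1}\mathcal P_1$, which confine the boundary trace of every vector of the space to that of the starting block $[\mathbf{u}_0,F_1]$. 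Thus $R_m=\tau(I-V_mV_m^T)\widebar K_dV_m\,Y_m$, and the whole task becomes the evaluation of $(I-V_mV_m^T)\widebar K_dV_m$.

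The core step is the rational Arnoldi relation. Because $\mathcal V_{m+1}$ is produced by orthogonalizing $(\widebar K_d-\xi_{m+1}I)^{-1}$ applied to the current space, the orthonormalization coefficients assemble into block upper Hessenberg matrices $\underline H_m$ and $\underline K_m$ with $\widebar K_dV_{m+1}\underline H_m=V_{m+1}\underline K_m$, the pole $\xi_{m+1}$ entering through $E_{m+1}^T\underline K_m=\xi_{m+1}E_{m+1}^T\underline H_m$ (see \cite{Druskin2011}). Splitting $V_{m+1}\underline H_m=V_mH_m+\mathcal V_{m+1}E_{m+1}^T\underline H_m$ and likewise for $\underline K_m$ (with top square block $K_m$), the relation reads $\widebar K_dV_mH_m+\widebar K_d\mathcal V_{m+1}E_{m+1}^T\underline H_m=V_mK_m+\xi_{m+1}\mathcal V_{m+1}E_{m+1}^T\underline H_m$. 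Solving for $\widebar K_dV_m$, multiplying on the right by $H_m^{-1}$, applying $I-V_mV_m^T$ (which kills the $V_m$ terms), and using $(I-V_mV_m^T)\mathcal V_{m+1}=\mathcal V_{m+1}$ yields $(I-V_mV_m^T)\widebar K_dV_m=\left(\xi_{m+1}I-(I-V_mV_m^T)\widebar K_d\right)\mathcal V_{m+1}E_{m+1}^T\underline H_mH_m^{-1}$. Substituting into $R_m=\tau(I-V_mV_m^T)\widebar K_dV_mY_m$ and taking $\|\cdot\|_F$ gives the claim.

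The main obstacle is this last relation. Unlike the extended case, $\widebar K_dV_m\notin\mathrm{Range}(V_{m+1})$, so the out-of-space residual is not simply $\mathcal V_{m+1}E_{m+1}^T\underline T_m$; the term $\widebar K_d\mathcal V_{m+1}$ spills outside $\mathrm{Range}(V_{m+1})$, and it is exactly this spill that generates the operator $\xi_{m+1}I-(I-V_mV_m^T)\widebar K_d$ in the final formula. Pinning down the normalization of $\underline H_m$ and $\underline K_m$ coming from the precise Arnoldi-like recurrence of \cite{Druskin2011}, and checking that the square block $H_m$ is invertible, are the delicate points; everything else is bookkeeping.
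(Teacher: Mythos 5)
Your proof is correct and takes essentially the same route as the paper: the Galerkin condition removes the in-space part of $R_m$, and the rational Arnoldi relation supplies the out-of-space part --- your identity $(I-V_mV_m^T)\widebar K_dV_m=\left(\xi_{m+1}I-(I-V_mV_m^T)\widebar K_d\right)\mathcal{V}_{m+1}E_{m+1}^T\underline{H}_mH_m^{-1}$ is exactly the paper's relation \eqref{rational_Arnoldi_rel} after left-multiplication by $I-V_mV_m^T$ together with the Hessenberg simplification $E_{m+1}^T\underline{H}_{m}(\mathrm{diag}(\xi_2,\ldots,\xi_{m+1})\otimes I_{p+1})=\xi_{m+1}E_{m+1}^T\underline{H}_{m}$. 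The only deviations are cosmetic: you derive that relation from the decomposition $\widebar K_dV_{m+1}\underline{H}_m=V_{m+1}\underline{K}_m$ rather than citing it, and you state explicitly (with a justification based on $\mathcal{P}_1\widebar K_1=\tau^{-1}\mathcal{P}_1$) the fact that $\left(\bigotimes_{i=1}^d(I_n-\mathcal{P}_1)\right)V_m$ has range contained in $\mathrm{Range}(V_m)$, a step the paper uses silently when replacing $\left(\bigotimes_{i=1}^d(I_n-\mathcal{P}_1)\right)V_mY_m$ by $V_m\mathcal{I}_mY_m$.
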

\begin{proof}
For the rational Krylov subspace
$\mathbf{K}_m^\square(\widebar K_d,[\mathbf{u}_0,F_1],\pmb{\xi})=\text{Range}(V_m)$, $V_m=[\mathcal{V}_1,\ldots,\mathcal{V}_m]$, the following Arnoldi-like relation holds
 \begin{equation}\label{rational_Arnoldi_rel}
\widebar K_dV_{m}
=
V_{m}T_{m}+\mathcal{V}_{m+1}E^T_{m+1}\underline{H}_{m}(\mbox{diag}(\xi_2,\ldots,\xi_{m+1})\otimes I_{p+1})H_{m}^{-1}
-(I-V_{m}V_{m}^T)\widebar K_d\mathcal{V}_{m+1}E^T_{m+1}\underline{H}_{m}H_{m}^{-1}.
 \end{equation}
See, e.g., \cite{Ruhe1994,Druskin2011}. 
Since the Arnoldi procedure is employed in the basis construction, $\underline{H}_m$ is a block upper Hessenberg matrix with block of size $p+1$ and we can write 
$$E^T_{m+1}\underline{H}_{m}(\mbox{diag}(\xi_2,\ldots,\xi_{m+1})\otimes I_{p+1})= \xi_{m+1} E^T_{m+1}\underline{H}_{m}.
$$
 The residual matrix $R_m$ is such that
 \begin{align*} 
    R_m  = & \left(\bigotimes_{i=1}^d(I-\mathcal{P}_1)+\tau \widebar K_d\right)V_mY_m-V_mY_m\Sigma_1^T-[\mathbf{u}_0,F_1][e_1,\tau F_2]^T\\
    =& V_m\left(\left(\mathcal{I}_m+\tau T_m\right)Y_m-Y_m\Sigma_1^T-E_1\pmb{\gamma}[e_1,\tau F_2]^T\right)\\
    &+\tau\left(\mathcal{V}_{m+1}E^T_{m+1}\underline{H}_{m}(\mbox{diag}(\xi_2,\ldots,\xi_{m+1})\otimes I_{p+1})H_{m}^{-1}
-(I-V_{m}V_{m}^T)\widebar K_d\mathcal{V}_{m+1}E^T_{m+1}\underline{H}_{m}H_{m}^{-1}\right)Y_m\\
    =&\tau\left(\xi_{m+1}\mathcal{V}_{m+1}E^T_{m+1}\underline{H}_{m}H_{m}^{-1}
-(I-V_{m}V_{m}^T)\widebar K_d\mathcal{V}_{m+1}E^T_{m+1}\underline{H}_{m}H_{m}^{-1}\right)Y_m,
    \end{align*}
and collecting the matrix $\mathcal{V}_{m+1}E^T_{m+1}\underline{H}_{m}H_{m}^{-1}$ we get the result.
\end{proof}

Proposition~\ref{res_norm_comp_rational} shows how the convergence check requires to compute the Frobenius norm of a $n\times m(p+1)$ matrix when the rational Krylov subspace is employed. This operation can be carried out in $\mathcal{O}(nm(p+1))$ flops by exploiting the cyclic property of the trace operator.

If $d=2,3$ and the initial values $u_0$, the source term $f$ and the boundary conditions $g$ are separable functions in the space variables, the same strategy presented in section~\ref{Structured space operators} can be adopted also when the rational Krylov subspace is selected in place of the extended one. We can compute $d$ rational Krylov subspaces corresponding to $d$ subspaces of $\mathbb{R}^{n}$ instead of one rational Krylov subspace contained in $\mathbb{R}^{n^d}$. Results similar to the one in Proposition~\ref{res_norm_comp_structured_op} can be derived by combining the arguments in the proof of 
Proposition~\ref{res_norm_comp_structured_op} with the Arnoldi-like relation~\eqref{rational_Arnoldi_rel}.

In this section we have assumed that the implicit Euler scheme is employed for the time integration. Some modifications are necessary to handle BDFs of higher order and the resulting scheme can be easily derived by following the discussion in section~\ref{Multistep methods}.
\section{The convection-diffusion equation}\label{The convection-diffusion equation}
In principle, the matrix reformulation presented in section~\ref{A matrix equation formulation}, and thus the solution process illustrated in section~\ref{The extended Krylov subspace method}-\ref{The rational Krylov subspace method}, can be applied to any PDEs of the form $u_t+\mathfrak{L}(u)=f$ where only space derivatives are involved in the linear differential operator  $\mathfrak{L}$.

In this section we provide some details in the case of the time-dependent convection-diffusion equation
\begin{equation}\label{condiff_eq}
 \begin{array}{rlll}
         u_t-\varepsilon\Delta u+\vec{w}\cdot\nabla u&=&f,& \quad \text{in }\Omega\times (0,T],\\
         u&=&g,& \quad \text{on } \partial\Omega,\\
         u(x,0)&=&u_0(x),&
        \end{array}
\end{equation}
where $\Omega\subset\mathbb{R}^d$ is regular, $\varepsilon>0$ is the viscosity parameter and the convection vector $\vec{w}=\vec{w}(x)$ is assumed to be incompressible, i.e., $\text{div}(\vec{w})=0$.

As already mentioned, if $K^{\text{cd}}_d\in\mathbb{R}^{n^d\times n^d}$ denotes the matrix stemming from the discretization of the convection-diffusion operator $\mathfrak{L}(u)=-\varepsilon\Delta u+\vec{w}\cdot\nabla u$ on $\widebar \Omega$, the same exact arguments of section~\ref{A matrix equation formulation} lead to the Sylvester matrix equation
$$(I_{n^d}+\tau K^{\text{cd}}_d)\mathbf{U}-\mathbf{U}\Sigma_1^T=[\mathbf{u}_0,F_1][e_1,\tau F_2],$$
when the backward Euler scheme is employed in the time integration.

If $d=1$ and $\vec{w}=\phi(x)$, the matrix $K_1^{\text{cd}}$ can be written as $K_1^{\text{cd}}=\varepsilon K_1+\Phi B_1$ where, as before, $K_1$ denotes the discrete negative laplacian whereas $B_1$ represents the discrete first derivative and the diagonal matrix $\Phi$ collects the nodal values $\phi(x_i)$ on its diagonal. 

In \cite{Palitta2016}, it has been shown that the 2- and 3D discrete convection-diffusion operators possess a Kronecker structure if the components of $\vec{w}$ are separable functions in the space variables.

If $\vec{w}=(\phi_1(x)\psi_1(y),\phi_2(x)\psi_2(y))$ and $\Phi_i$, $\Psi_i$ are diagonal matrices collecting on the diagonal the nodal values of the corresponding functions $\phi_i$, $\psi_i$, $i=1,2$, then
\begin{equation}\label{convdiff_2d}
K_2^{\text{cd}}=\varepsilon K_1\otimes I+\varepsilon I\otimes K_1+\Psi_1\otimes \Phi_1 B_1+\Psi_2B_1\otimes \Phi_2.
\end{equation}
See \cite[Proposition 1]{Palitta2016}. Analogously, if $d=3$ and $\vec{w}=(\phi_1(x)\psi_1(y)\upsilon_1(z),\phi_2(x)\psi_2(y)\upsilon_2(z),\phi_3(x)\psi_3(y)\upsilon_3(z))$, we can write
\begin{equation}\label{convdiff_3d}
K_3^{\text{cd}}=\varepsilon K_1\otimes I\otimes I+\varepsilon I\otimes K_1\otimes I+\varepsilon I\otimes I\otimes K_1+\Psi_1\otimes\Upsilon_1\otimes \Phi_1B_1+\Psi_2B_1\otimes\Upsilon_2\otimes \Phi_2+\Psi_3\otimes\Upsilon_3B_1\otimes \Phi_3.
\end{equation}
where, as before, the diagonal matrices $\Phi_i$, $\Psi_i$, $\Upsilon_i$ collect on the main diagonal the nodal values of the corresponding functions.
See \cite[Proposition 2]{Palitta2016}.

In this case, we can take advantage of the Kronecker structure of $K_d^{\text{cd}}$ to automatically include the boundary conditions in the matrix equation formulation of the time-dependent convection-diffusion equation. This can be done by combining the arguments of section~\ref{Imposing the boundary conditions} with the strategy presented in \cite[Section 3]{Palitta2016}.

Even though $K_d^{\text{cd}}$ still has a Kronecker structure, this cannot be 
exploited in general for reducing the cost of the basis generation for $d=2,3$ as it has been described in section~\ref{Structured space operators}, also when $u_0$, $f$ and $g$ are separable functions in the space variables. This is due to the presence of the extra terms containing $B_1$ in the definitions \eqref{convdiff_2d}-\eqref{convdiff_3d} of $K_d^{\text{cd}}$.
Indeed, $K_d^{\text{cd}}$ is no longer of the form $\sum_{i=1}^d I\otimes \cdots \otimes I\otimes A_i\otimes I\otimes\cdots\otimes I$ and the tensorized Krylov approach presented in \cite{Kressner2009} cannot be employed. This difficulty is strictly related to the fact that efficient projection methods for generic generalized Sylvester equations of the form
$$\sum_{j=1}^p A_iXB_i=C_1C_2^T,$$
have not been developed so far.
The available methods work well if the coefficient matrices $A_i$ and $B_i$ fulfill certain assumptions which may be difficult to meet in case of the discrete convection-diffusion operator. See, e.g, \cite{Jarlebring2018,Benner2013a,Powell2017,Shank2016} for more details about solvers for generalized matrix equations.

The matrix $K_d^{\text{cd}}$ can be expressed as $\sum_{i=1}^d I\otimes \cdots \otimes I\otimes A_i\otimes I\otimes\cdots\otimes I$ in some very particular cases. For instance, if $d=2$ and $\vec{w}=(\phi(x),\psi(y))$, then 
$$K_2^{\text{cd}}=(\varepsilon K_1+\Psi B_1)\otimes I+ I\otimes (\varepsilon K_1+ \Phi B_1).
$$
Therefore, if $\mathbf{u}_0=\pmb{\phi}_{u_0}\otimes\pmb{\psi}_{u_0}$ and $[\mathbf{f}_1,\ldots,\mathbf{f}_\ell]=F_1F_2^T =(\Phi_f\otimes\Psi_f)F_2^T,$ the spaces $\mathbf{EK}_m^\square(\varepsilon K_1+\Psi B_1,
[\pmb{\phi}_{u_0},\Phi_f])$
and $\mathbf{EK}_m^\square(\varepsilon K_1+\Phi B_1,
[\pmb{\psi}_{u_0},\Psi_f])$ can be constructed in place of $\mathbf{EK}_m^\square(K_2^{\text{cd}},
[\mathbf{u}_0,F_1])$. Similarly if the rational Krylov subspace is employed as approximation space.

\section{Numerical results}\label{Numerical results}
In this section we compare our new matrix equation approach with state-of-the-art procedures for the solution of the algebraic problem arising from the discretization of time-dependent PDEs.
Different solvers can be applied to \eqref{all_at_once_system} depending on how one interprets the underlying structure of the linear operator $\mathcal{A}$.
We reformulate \eqref{all_at_once_system} as a matrix equation but clearly $\mathcal{A}$ can be seen as a large structured matrix and well-known iterative techniques as, e.g., GMRES \cite{Saad1986}, can be employed in the solution of the linear system \eqref{all_at_once_system}.
The matrix $\mathcal{A}$ does not need to be explicitly assembled and its Kronecker structure can be exploited to perform ``matrix-vector'' products. Moreover, one should take advantage of the low-rank of the right-hand side $\text{vec}([\mathbf{u}_0,F_1][e_1,\tau F_2]^T)$ to reduce the memory consumption of the procedure. Indeed, if $n^d\ell$ is very large, we would like to avoid the allocation of any long $n^d\ell$ dimensional vectors and this can be done by rewriting the Krylov iteration in matrix form and equipping the Arnoldi procedure with a couple of low-rank truncations. These variants of Krylov schemes are usually referred to as low-rank Krylov methods and in the following we will apply low-rank GMRES (LR-GMRES) to the solution of 
\eqref{all_at_once_system}. 
See, e.g., \cite{Benner2013a,Hochbruck1995,Breiten2016,Stoll2015} for some low-rank Krylov procedures applied to the solution of linear matrix equations while \cite{Kuerschner2019} for details about how to preserve the convergence properties of the Krylov routines when low-rank truncations are performed. 


Both the aforementioned variants of GMRES needs to be preconditioned to achieve a fast convergence in terms of number of iterations. 
In \cite{McDonald2018}, it has been shown that the operator
$$
\begin{array}{rrll}
\mathfrak{P}:& \mathbb{R}^{n^d\ell}&\rightarrow& \mathbb{R}^{n^d\ell}\\
&x&\mapsto& (I_\ell\otimes(I_{n^d}+\tau K_d)-C_1\otimes I_{n^d})x,
\end{array}$$
is a good preconditioner for \eqref{all_at_once_system}. If right preconditioning is adopted, at each iteration of the selected Krylov procedure we have to solve an equation of the form $\mathfrak{P}\widehat v=v_m$, where $v_m$ denotes the last basis vector that has been computed. Again, many different procedures can be employed for this task.
In case of GMRES, we proceed as follows. We write
\begin{align*}
 \widehat v=&\mathfrak{P}^{-1} v_m=(I_\ell\otimes(I_{n^d}+\tau K_d)-C_1\otimes I_{n^d})^{-1} v_m\\
 =&(\mathcal{F}^{-1}\otimes I_{n^d})(I_\ell\otimes(I_{n^d}+\tau K_d)-\Pi_1\otimes I_{n^d})^{-1} (\mathcal{F}\otimes I_{n^d})v_m,
\end{align*}
and we solve the block diagonal linear system with 
$I_\ell\otimes(I_{n^d}+\tau K_d)-\Pi_1\otimes I_{n^d}$
by applying block-wise the algebraic multigrid method AGMG developed by Notay and coauthors \cite{Notay2010,Napov2012,Notay2012}. 

In the low-rank Krylov technique framework, the allocation of the full basis vector $v_m\in\mathbb{R}^{n^d\ell}$ is not allowed as we would lose all the benefits coming from the low-rank truncations. Since $\mathfrak{P}\widehat v=v_m$ can be recast in terms of a matrix equation, in case of LR-GMRES we can inexactly invert $\mathfrak{P}$ by applying few iterations of Algorithm~\ref{EK_algorithm}. Notice that in this case, due to the definition of $\mathfrak{P}$, the solution of the inner equations in Algorithm~\ref{EK_algorithm} is easier. Indeed, with the notation of section~\ref{Efficient inner solves}, we have $Y_m=S_mZ\mathcal{F}^{-T}$ at each iteration $m$. However, since the extra computational efforts of computing $Y_m$ by \eqref{efficient_Ym} turned out to be very moderate with respect to the cost of performing $Y_m=S_mZ\mathcal{F}^{-T}$, we decided to run few iterations\footnote{In all the reported examples we performed 10 iterations of Algorithm~\ref{EK_algorithm} at each outer iteration.} of Algorithm~\ref{EK_algorithm} with the original operator instead of the preconditioner $\mathfrak{P}$. 
This procedure can be seen as an inner-outer Krylov scheme \cite{Simoncini2002}.



The preconditioning techniques adopted within GMRES and LR-GMRES
are all nonlinear. We thus have to employ flexible variants of the outer Krylov routines, namely FGMRES \cite{Saad1993} and LR-FGMRES.

We would like to underline that  the concept of preconditioning does not really exist in the context of matrix equations. See, e.g., \cite[Section 4.4]{Simoncini2016}. The efficiency of our novel approach mainly relies on the effectiveness of the selected approximation space.

In the following we will denote our matrix equation approach by either EKSM, when the extended Krylov subspace is adopted, or
RKSM, if the rational Krylov subspace is employed as approximation space. 
The construction of both the extended Krylov subspace $\mathbf{EK}_m^\square(\widebar K_d,[\mathbf{u}_0,F_1])$ and the rational Krylov subspace $\mathbf{K}_m^\square(\widebar K_d,[\mathbf{u}_0,F_1],\pmb{\xi})$ requires the solution of linear systems with the coefficient matrix $\widebar K_d$ (or a shifted version of it). Except for Example~\ref{Ex.4}, these linear solves are carried out by means of the Matlab sparse direct solver \emph{backslash}. In particular, for EKSM, the LU factors of $\widebar K_d$ are computed once and for all at the beginning of the iterative procedure so that only triangular systems are solved during the basis construction. The time for such LU decomposition is always included in the reported results.

To sum up, we are going to compare EKSM and RKSM with FGMRES preconditioned by AGMG (FGMRES+AGMG) and LR-FGMRES preconditioned by EKSM (LR-FGMRES+EKSM). 
The performances of the different algorithms are compared in terms of both computational time and memory requirements. In particular, since all the methods we compare need to allocate the basis of a certain Krylov subspace, the storage demand of each algorithm consists in the dimension of the computed subspace. The memory requirements of the adopted schemes are summarized in Table~\ref{mem_requirements} where $m$ indicates the number of performed iterations.

{\renewcommand{\arraystretch}{1.5}%
 \begin{table}[!ht]
 \centering
 \caption{Storage demand of the compared methods. \label{mem_requirements}}
\begin{tabular}{c|c|c|c}
 EKSM & RKSM & FGMRES & LR-FGMRES\\
 \hline
 $2(m+1)(p+1)(n^d+\ell)$
 & $(m+1)(p+1)(n^d+\ell)$
 & $(2m+1)n^d\ell$ & $\displaystyle{(n^d+\ell)\left(\sum_{i=1}^m( r_i+z_i)+r_{m+1}\right)}$\\ 
 \end{tabular}
 \end{table}
}

For LR-FGMRES, $r_i$ and $z_i$ denote the rank of the low-rank matrix representing the $i$-th vector of the unpreconditioned and preconditioned basis respectively.

Notice that for separable problems where the strategy presented in section~\eqref{Structured space operators} can be applied, the memory requirements of EKSM and RKSM can be reduced to $2(m+1)\sum_{i=1}^dp_in+2^d(m+1)^d\prod_{i=1}^dp_i\ell$ and $(m+1)\sum_{i=1}^dp_in+(m+1)^d\prod_{i=1}^dp_i\ell$ respectively, where $p_i$ denotes the rank of the initial block used in the construction of the $i$-th Krylov subspace, $i=1,\ldots,d$.

If not stated otherwise, the tolerance of the final relative residual norm is always set to $10^{-6}$.

All results were obtained by running MATLAB R2017b  \cite{MATLAB} on a 
standard node of the Linux cluster Mechthild hosted at the Max Planck Institute for Dynamics of Complex Technical Systems in Magdeburg, Germany\footnote{See {\tt https://www.mpi-magdeburg.mpg.de/cluster/mechthild} for further details.}.

We would like to mention that the operator $\mathcal{A}$ in \eqref{all_at_once_system} can be seen also as a tensor. In this case, the algebraic problem stemming from the discretization scheme thus amount to a tensor equation for which different solvers have been proposed in the recent literature. See, e.g.,\cite{Dolgov2013,Ballani2013,Dolgov2014,Andreev2015}.
To the best of our knowledge, all the routines for tensor equations available in the literature include a rank truncation step to reduce the storage demand of the overall procedure. Most of the time, a user-specified, constant  rank $r$ is employed in such truncations and determining the  value of $r$ which provides the \emph{best} trade off between accuracy and memory reduction is a very tricky task while the performance of the adopted scheme deeply depends on this selection. See, e.g., \cite[Section 4]{Andreev2015}. This drawback does not affect our matrix equation schemes where no rank truncation is performed while moderate memory requirements are still achieved as illustrated in the following examples. Moreover, tensor techniques are specifically designed for solving high dimensional PDEs and we believe they are one of the few multilinear algebra tools that are able to deal with the peculiar issues of such problems. However, here we consider problems whose dimensionality is at most 4 ($d=3$ in space and one dimension in time). 
Due to the aspects outlined above, we refrain from comparing our matrix equation schemes with tensor approaches as a fair numerical comparison is difficult to perform.

\begin{num_example}\label{Ex.1}
{\rm
Before comparing EKSM and RKSM with other solvers we would like to show first how our novel reformulation of the algebraic problem in terms of a Sylvester matrix equation is able to maintain the convergence order of the adopted discretization schemes. In particular, we present only the results obtained by EKSM as the ones achieved by applying RKSM are very similar.

We consider the following 1D problem
\begin{equation}\label{Ex.1_eq}
 \begin{array}{rlll}
         u_t&=&\Delta u,& \quad \text{in }(0,\pi)\times (0,1],\\
         u(0)=u(\pi)&=&0,&\\
         u(x,0)&=&\sin(x).&
        \end{array}
\end{equation}
This is a toy problem as the exact solution is known in closed form and it is given by $u(x,t)=\sin(x)e^{-t}$. With $u$ at hand, we are able to calculate the discretization error provided by our solution process. 

Equation~\eqref{Ex.1_eq} is discretized by means of second order centered finite differences in space and a BDF of order $s$, $s\leq 6$, in time.

In the following we denote by $U_m\in\mathbb{R}^{n\times \ell}$ the 
approximate solution computed by EKSM, by $U$ the $n\times \ell$ matrix whose $i$-th column represents the exact solution evaluated on the space nodal values at time $t_i$ whereas $\mathbf{U}\in\mathbb{R}^{n\times \ell}$ collects the $\ell$ vectors computed by sequentially solving the linear systems in \eqref{discrete_eq} by backslash.

We first solve the algebraic problem by EKSM with a tolerance $\epsilon=10^{-10}$ and we compare the obtained $U_m$ with $\mathbf{U}$.
In Table~\ref{tab1.1} we report the results for $n=4096$, $s=1$ and different values of $\ell$.

\begin{table}[!ht]
 \centering
 \caption{Example \ref{Ex.1}. Results for different values of $\ell$. $n=4096$, $s=1$. \label{tab1.1}}
\begin{tabular}{|r| r r| r|r| }
\hline
 & \multicolumn{2}{c|}{EKSM}
 & backslash & \\
 $\ell$ & It. & Time (secs) & Time (secs) & $\|U_m-\mathbf{U}\|_F/\|\mathbf{U}\|_F$ \\
 \hline
1024&2 & 4.891e-2 & 5.697e-1 & 2.009e-10 \\

4096&2 & 6.094e-2 & 2.501e0 & 1.0066e-10 \\

16384&2 & 8.647e-2 & 9.912e0 & 9.931e-11 \\

65536&2 & 1.737e-1 & 3.964e1 & 1.069e-11 \\
\hline
\end{tabular}
 \end{table}

Looking at the timings reported in Table~\ref{tab1.1}, since EKSM requires two iterations to convergence for all the tested values of $\ell$, we can 
readily appreciate how the computational cost of our novel approach mildly depends on $\ell$ while the time for the sequential solution of the linear systems 
in \eqref{discrete_eq} linearly grows with the number of time steps.

Moreover, we see how, for this example, we can obtain a very small algebraic error $\|U_m-\mathbf{U}\|_F/\|\mathbf{U}\|_F$ by setting a strict tolerance on the relative residual norm computed by EKSM. This means that, when we compare $U_m$ with $U$, the discretization error is the quantity that contributes the most to $\|U_m-U\|_F/\|U\|_F$.
In Figure~\ref{fig:1} we plot $\|U_m-U\|_F/\|U\|_F$ for different values of $n$, $\ell$ and $s$. In particular, in the picture on the left we plot the relative error for $\ell=16384$ and $s=1$ while varying $n$. On the right, we fix $n=32768$ and we plot 
$\|U_m-U\|_F/\|U\|_F$ for different values of $\ell$ and $s=1,2,3$. Notice that by knowing the analytic expression of the solution $u$, for $s>1$ we are able to provide the $s-1$ additional initial conditions $\mathbf{u}_1,\ldots,\mathbf{u}_{s-1}$ and the extended Krylov subspace $\mathbf{EK}_m^\square(\widebar K_1,[\sum_{j=1}^s\alpha_j \mathbf{u}_{s-j},\sum_{j=1}^{s-1}\alpha_{j+1} \mathbf{u}_{s-j}, \ldots,\alpha_s \mathbf{u}_{s-1}])$ can be constructed as discussed in section~\ref{Multistep methods}.

From the plots in Figure~\ref{fig:1} we can recognize how the convergence order of the tested discretization schemes is always preserved. Similar results are obtained for larger values of $s$, namely $s=4,5,6$, provided either a larger $n$ or a space discretization scheme with a higher convergence order is employed.

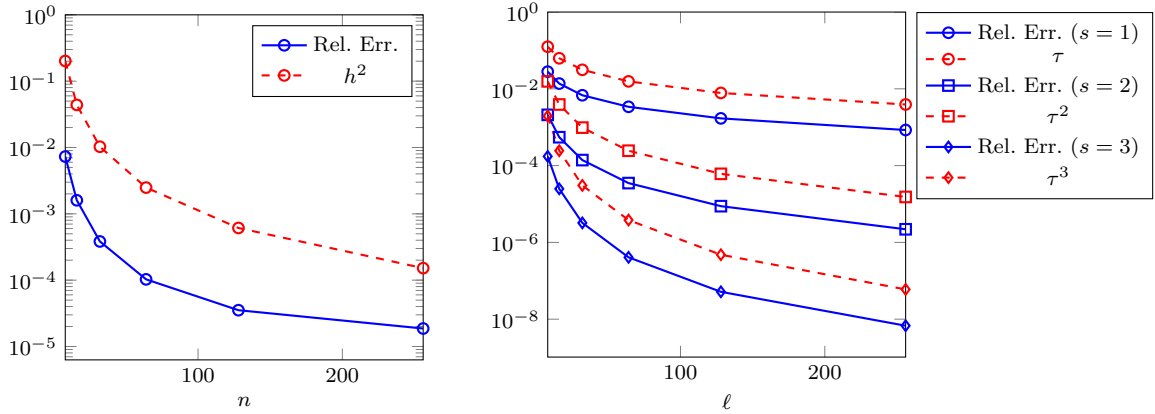
\begin{figure}
  \centering
  \caption{Example~\ref{Ex.1}. $\|U_m-U\|_F/\|U\|_F$ for different values of $n$, $\ell$ and $s$. Left: $\ell=16384$, $s=1$ while $n$ varies ($h$ denotes the space mesh size). Right: $n=32768$, $s=1,2,3$ while $\ell$ varies.} \label{fig:1}
  \begin{minipage}{.48\linewidth}
  	\centering 
  	\hspace{-2cm}
  \begin{tikzpicture}
    \begin{semilogyaxis}[width=0.8\linewidth, height=.27\textheight,
      legend pos =  north east,
      xlabel = $n$,xmin=8, xmax=256, ymax = 1e0]
      \addplot+[mark=o, color=blue, mark options={solid},thick] table[x index=0, y index=2]{data.dat};
       \addplot+ [mark=o, color=red, mark options={solid},thick,dashed]table[x index=0, y index=3]{data.dat};
        \legend{Rel. Err.,$h^2$};
    \end{semilogyaxis}          
  \end{tikzpicture}
  \end{minipage}~\begin{minipage}{0.48\textwidth}  	
    
  \centering
  	\hspace{-2cm}
\begin{tikzpicture}
    \begin{semilogyaxis}[width=0.8\linewidth, height=.27\textheight,
      legend pos = outer north east,
      xlabel = $\ell$,xmin=8, xmax=256, ymax = 1e0]
      \addplot+[mark=o, color=blue, mark options={solid},thick] table[x index=0, y index=4]{data_time.dat};
       \addplot+ [mark=o, color=red, mark options={solid},thick,dashed]table[x index=0, y index=1]{data_time.dat};
       
       \addplot+[mark=square, color=blue, mark options={solid},thick] table[x index=0, y index=5]{data_time.dat};
       \addplot+ [mark=square, color=red, mark options={solid},thick,dashed]table[x index=0, y index=2]{data_time.dat};
       
       \addplot+[mark=diamond, color=blue, mark options={solid},thick] table[x index=0, y index=6]{data_time.dat};
       \addplot+ [mark=diamond, color=red, mark options={solid},thick,dashed]table[x index=0, y index=3]{data_time.dat};
        \legend{Rel. Err. ($s=1$),$\tau$,Rel. Err. ($s=2$),$\tau^2$,Rel. Err. ($s=3$),$\tau^3$};
    \end{semilogyaxis}          
  \end{tikzpicture}

  \end{minipage}
 
\end{figure}

 }
\end{num_example}

\begin{num_example}\label{Ex.2}
{\rm
In the second example we consider 
the same equation presented in \cite[Section 6.1]{McDonald2018}.
This consists in the following 2D heat equation
\begin{equation}\label{Ex.2_eq}
 \begin{array}{rlll}
         u_t&=&\Delta u,&  \text{in }\Omega\times (0,1],\;\Omega:=(0,1)^2,\\
         u&=&0,&\text{on }\partial\Omega,\\
         u_0=u(x,y,0)&=&x(x-1)y(y-1).&
        \end{array}
\end{equation}
Equation~\eqref{Ex.2_eq} is discretized by means of second order centered finite differences in space and the backward Euler scheme in time.

Since the initial condition is a separable function in the space variables, and both the source term and the boundary conditions are zero, the strategy presented in section~\ref{Structured space operators} can be adopted. In particular if $\mathbf{u}_0$ denotes the $n^2$ vector collecting the values of $u_0$ for all the nodal values $(x_i,y_j)$, then we can write
$\mathbf{u}_0=\pmb{\phi}_{u_0}\otimes \pmb{\psi}_{u_0}$ where 
$\pmb{\phi}_{u_0}=[x_1(x_1-1),\ldots,x_n(x_n-1)]^T$, $\pmb{\psi}_{u_0}=[y_1(y_1-1),\ldots,y_n(y_n-1)]^T\in\mathbb{R}^n$. Therefore, the two extended Krylov subspaces $\mathbf{EK}_m^\square(\widebar K_1,\pmb{\phi}_{u_0})$ and $\mathbf{EK}_m^\square(\widebar K_1,\pmb{\psi}_{u_0})$ can be constructed in place of $\mathbf{EK}_m^\square(\widebar K_2,\mathbf{u}_0)$. Similarly for the rational Krylov subspace method.

In Table~\ref{tab2.1} we report the results for different values of $n$ and $\ell$.

\begin{table}[!ht]
 \centering
 \caption{Example \ref{Ex.2}. Results for different values of $n$ and $\ell$.\label{tab2.1}}
\begin{tabular}{|r r|r r|r r|rr|rr| }
\hline
 & & \multicolumn{2}{c|}{EKSM}
 & \multicolumn{2}{c|}{RKSM} & \multicolumn{2}{c|}{FGMRES+AGMG}& \multicolumn{2}{c|}{LR-FGMRES+EKSM}\\
 $n^2$ & $\ell$ & It. & Time (secs) & It. & Time (secs)&  It. & Time (secs)& It. & Time (secs)  \\
 \hline
4096 & 1024 & 6 & 2.487e-1 & 9& 3.313e-1 & 1 & 9.832e0 & 1& 1.899e-1 \\

& 4096 & 6 & 4.209e-1 & 9& 3.140e-1 &1 & 2.355e1 & 1& 1.747e-1 \\

& 16384 & 6 & 6.182e-1 & 9& 5.913e-1 & 1 & 7.025e1 & 1& 3.020e-1 \\

& 65536 & 6 & 1.671e0 & 9& 1.783e0 & 1 & 3.289e2 & 2& 4.001e0 \\

\hline
16384 & 1024 & 7 & 2.989e-1 & 11& 3.629e-1 & 1 & 3.662e1 & 2& 2.476e0 \\

& 4096 & 8 & 4.449e-1 & 11& 4.252e-1 & 1 & 1.135e2 & 2& 2.624e0 \\

& 16384 & 8 & 1.426e0 & 11& 1.089e0 & 1 & 3.418e2 & 2& 2.595e0 \\

& 65536 & 7 & 2.480e0 & 10& 2.349e0 & 1 & 1.483e3 & 2& 5.584e0 \\

\hline
65536 & 1024 & 8 & 4.071e-1 & 11& 3.887e-1 & 1 & 1.354e2 & 2& 1.992e1 \\

& 4096 & 10 & 9.726e-1 & 13& 5.540e-1 & 1 & 4.819e2 & 2& 1.980e1 \\

& 16384 & 10 & 1.916e0 & 13& 1.401e0 & 1 & 1.727e3 & 2& 2.141e1 \\

& 65536 & 10 & 5.469e0 & 11& 2.895e0 & OoM & OoM & 2& 1.654e1 \\

\hline

\end{tabular}
 \end{table}

As outlined in \cite{McDonald2018}, the preconditioner $\mathfrak{P}$ is very effective in reducing the total iteration count in FGMRES+AGMG and one FGMRES iteration is sufficient for reaching the desired accuracy for every value of $n$ and $\ell$ we tested. However, the preconditioning step is very costly in terms of computational time; this almost linearly grows with $\ell$. FGMRES+AGMG may benefit from the employment of a parallel implementation in the inversion of the block diagonal matrix $I_\ell\otimes \left(\bigotimes_{i=1}^2(I_{n}-\mathcal{P}_1)+\tau\widebar K_2\right)-\Pi_1\otimes I_{n^2}$.
Moreover, for the largest problem dimension we tested, the system returned an \emph{Out of Memory} (OoM) message as we are not able to allocate any $n^2\ell$ dimensional vectors.

LR-FGMRES+EKSM performs quite well in terms of computational time, especially for small $n$, and the number of iterations needed to converge is rather independent of both $n$ and $\ell$ confirming the quality of the inner-outer preconditioning technique.

Our new algorithms, EKSM and RKSM, are very fast. We would like to remind the reader that, for this example, the number of degrees of freedom (DoF) is equal to $n^2\ell$. This means that, for the finest refinement of the space and time grids we tested, our routines are able to solve a problem with $\mathcal{O}\left(4\cdot10^9\right)$ DoF in few seconds while reaching the desired accuracy. 

The number of iterations performed by EKSM and RKSM turns out to be very robust with respect to $\ell$ and the (almost) constant iteration count we obtain for a fixed $n$ lets us appreciate once more how the computational cost of our procedures modestly grows with $\ell$.

The robustness of our routines with respect to $\ell$ is not surprising. Indeed, the projection procedure we perform only involves the spatial component of the overall operator, namely $\bigotimes_{i=1}^2(I_n-\mathcal{P}_1)-\tau\widebar K_2$, and its effectiveness thus strictly depends on the
 spectral properties of $\bigotimes_{i=1}^2(I_n-\mathcal{P}_1)-\tau\widebar K_2$ which are mainly 
 fixed for a given $n$ although the mild dependence on $\ell$ due to the presence of the scalar $\tau$. 

Thanks to the separability of equation~\eqref{Ex.2_eq} and the employment of the strategy presented in section~\ref{Structured space operators}, EKSM and RKSM are very competitive also in terms of storage demand as illustrated in Table~\ref{tab2.2}.

{\renewcommand{\arraystretch}{1.2}%
\begin{table}[!ht]
 \centering
 \caption{Example \ref{Ex.2}. Memory requirements of the compared methods for different values of $n$ and $\ell$.\label{tab2.2}}
\begin{tabular}{|r r r r r r| }
\hline
 $n^2$ & $\ell$ & EKSM
 & RKSM & FGMRES+AGMG& LR-FGMRES+EKSM\\
 
 \hline
4096 & 1024 &28$n$+196$\ell$  &
20$n$+100$\ell$& 3$n^2\ell$& 18$(n^2+\ell)$\\
 & 4096 &28$n$+196$\ell$  &
20$n$+100$\ell$& 3$n^2\ell$& 18$(n^2+\ell)$\\
 & 16384 &28$n$+196$\ell$  &
20$n$+100$\ell$& 3$n^2\ell$& 18$(n^2+\ell)$\\
 & 65536 &28$n$+196$\ell$  &
20$n$+100$\ell$& 3$n^2\ell$& 80$(n^2+\ell)$\\

\hline

16384 & 1024 &32$n$+256$\ell$  &
24$n$+144$\ell$& 3$n^2\ell$& 84$(n^2+\ell)$\\
 & 4096 &36$n$+324$\ell$  &
24$n$+144$\ell$& 3$n^2\ell$& 84$(n^2+\ell)$\\
 & 16384 &36$n$+324$\ell$  &
24$n$+144$\ell$&3$n^2\ell$ & 85$(n^2+\ell)$\\
 & 65536 &32$n$+256$\ell$  &
22$n$+121$\ell$& 3$n^2\ell$& 86$(n^2+\ell)$\\
 \hline

65536 & 1024 &36$n$+324$\ell$  &
24$n$+144$\ell$& 3$n^2\ell$& 87$(n^2+\ell)$\\
 & 4096 &44$n$+484$\ell$  &
28$n$+196$\ell$& 3$n^2\ell$& 89$(n^2+\ell)$\\
 & 16384 &44$n$+484$\ell$  &
28$n$+196$\ell$& 3$n^2\ell$& 90$(n^2+\ell)$\\
 & 65536 &44$n$+484$\ell$  &
24$n$+144$\ell$& OoM& 90$(n^2+\ell)$\\
 \hline

\end{tabular}
 \end{table}
}
 }
\end{num_example}

\begin{num_example}\label{Ex.2.1}
{\rm
We now consider the isotropic diffusion example presented in \cite[Section 4.1]{Andreev2015} in the case of $d=3$. 
This problem consists in the following 3D heat equation
\begin{equation}\label{Ex.2.1_eq}
 \begin{array}{rlll}
         u_t&=&\Delta u+f,&  \text{in }\Omega\times (0,2],\;\Omega:=(-1,1)^3,\\
         u&=&0,&\text{on }\partial\Omega,\\
         u_0&=&0,&
        \end{array}
\end{equation}
where $f=f(x,y,z,t)=(1+\sin(\frac{\pi t}{2}))(1-x^2)e^x(1-y^2)e^y(1-z^2)e^z$.
Equation~\eqref{Ex.2.1_eq} is again discretized by means of second order centered finite differences in space and the backward Euler scheme in time.

Also for this example the strategy presented in section~\ref{Structured space operators} can be adopted. Indeed, both the initial condition and the boundary conditions are zero while the source term $f$ is a separable function in the space and time variables.  In particular, the discretization phase leads to a Sylvester equation of the form \eqref{Sylv_eq} where $\mathbf{u}_0=0$ and the matrix $[\mathbf{f}_1,\ldots,\mathbf{f}_\ell]$ can be written as 
$$[\mathbf{f}_1,\ldots,\mathbf{f}_\ell]=(\Phi_f\otimes\Psi_f\otimes\Upsilon_f)F_2^T,$$
where 
$\Phi_f=[(1-x_1^2)e^{x_1},\ldots,(1-x_n^2)e^{x_n}]^T$, $\Psi_f=[(1-y_1^2)e^{y_1},\ldots,(1-y_n^2)e^{y_n}]^T$,$\Upsilon_f=[(1-z_1^2)e^{z_1},\ldots,(1-z_n^2)e^{z_n}]^T\in\mathbb{R}^n$, and $F_2=[(1+\sin(\frac{\pi t_1}{2})),\ldots,(1+\sin(\frac{\pi t_\ell}{2}))]^T\in\mathbb{R}^{\ell}$. Therefore, the three extended Krylov subspaces $\mathbf{EK}_m^\square(\widebar K_1,\Phi_{f})$, $\mathbf{EK}_m^\square(\widebar K_1,\Psi_{f})$, and $\mathbf{EK}_m^\square(\widebar K_1,\Upsilon_{f})$ can be constructed in place of $\mathbf{EK}_m^\square(\widebar K_3,\Phi_f\otimes\Psi_f\otimes\Upsilon_f)$. Similarly for the rational Krylov subspace method.

In Table~\ref{tab2.1.1} we report the results for different values of $n$ and $\ell$. In particular, due to the very large number of DoFs adopted for this example, Table~\ref{tab2.1.1} depicts the performance of EKSM and RKSM only.

{\renewcommand{\arraystretch}{1.2}%
\begin{table}[!ht]
 \centering
 \caption{Example \ref{Ex.4}. Results for different values of $n$ and $\ell$.\label{tab2.1.1}}
\begin{tabular}{|r r |r r r|r r r| }
\hline
 & & \multicolumn{3}{c|}{EKSM}
 & \multicolumn{3}{c|}{RKSM} \\
   $n^3$ & $\ell$ & It. & Time (secs) & Mem. & It. & Time (secs)&  Mem.   \\
 \hline
 262144 & 1024 & 7 & 1.739e1  & $48n+4096\ell$ & 10  &2.139e0  & $33n+1331\ell$\\
 & 4096 & 7  & 1.918e0  & $48n+4096\ell$  & 11 & 2.196e0 &$36n+1728\ell$ \\
 & 16384 & 7 & 2.422e1 & $48n+4096\ell$ & 11 & 4.889e0 & $36n+1728\ell$\\
& 65536 & 7  & 4.157e1  & $48n+4096\ell$ & 11 & 1.647e1 & $36n+1728\ell$\\

 \hline

 2097152 & 1024 & 8  & 5.927e1  & $54n+5832\ell$ & 12 &6.994e0  & $39n+2197\ell$\\
 & 4096 & 8 & 5.993e1 & $54n+5832\ell$ & 11 & 4.718e0 & $36n+1728\ell$\\
 & 16384 & 9 & 1.787e2  & $60n+8000\ell$ & 12 & 7.105e0 & $39n+2197\ell$\\
& 65536 & 9 & 2.450e2 & $60n+8000\ell$ & 12 & 2.429e1 & $39n+2197\ell$\\

\hline

 16777216 & 1024 & 9 & 1.464e2 & $60n+8000\ell$ & 12 & 6819e0  & $39n+2197\ell$\\
 & 4096 & 9 & 1.502e2 & $60n+8000\ell$ & 14 & 2.618e1 & $45n+3375\ell$ \\
 & 16384 & 10 & 4.147e2 & $66n+10648\ell$ & 13 & 2.126e1 & $42n+2744\ell$\\
& 65536 & 10 & 5.644e2 & $66n+10648\ell$ & 13 & 3.560e1 & $42n+2744\ell$ \\

\hline

\end{tabular}
 \end{table}
}

We would like to stress one more time that even if \eqref{Ex.2.1_eq} amounts to a three-dimensional problem in space, the full exploitation of its separable structure leads to the employment of one-dimensional discrete operators in the basis construction. Therefore, the linear system solutions involved in both EKSM and RKSM can be efficiently performed by means of a sparse direct solver. 
Moreover, thanks to the strategy presented in section~\ref{Structured space operators}, EKSM and RKSM are very competitive also in terms of storage demand. For instance, for the finest refinement of the space and time grids we tested, which involves $\mathcal{O}(10^{12})$ DoFs, the whole RKSM procedure needs about the 0.015\% of the memory demand required by the allocation of the only right-hand side in the linear system formulation \eqref{eq.linear_system}.

We believe both EKSM and RKSM are very competitive also in terms of computational time as they manage to solve problems with a tremendous number of
DoFs in few seconds while always reaching the desired accuracy. From the results in Table~\ref{tab2.1.1} we can notice that the EKSM running time is always remarkably larger than the one achieved by RKSM, even though the number of iterations performed by the two routines is rather similar. This is due to the larger space constructed by EKSM and the consequent increment in the cost of the inner solutions. Indeed, at iteration $m$, 
the solution of equation \eqref{projected_eq} within
EKSM requires to compute the eigendecomposition of a $8m^3\times 8m^3$ matrix which costs $\mathcal{O}(512m^3)$ flops. On the other hand, a matrix of order $m^3$ is involved in the projected equation at the $m$-th RKSM
iteration so that the cost of its eigendecomposition is reduced to $\mathcal{O}(m^3)$ flops.

To conclude, also for this example the number of iterations performed by EKSM and RKSM turns out to be very robust with respect to $\ell$.

 }
\end{num_example}

\begin{num_example}\label{Ex.3}
{\rm
We consider another example coming from \cite{McDonald2018}. In particular, the problem we address is the following time-dependent convection-diffusion equation
\begin{equation}\label{Ex.3_eq}
 \begin{array}{rlll}
         u_t-\varepsilon\Delta u + \vec{w}\cdot\nabla u&=&0,&  \text{in }\Omega\times (0,1],\;\Omega:=(0,1)^2,\\
         u&=&g(x,y),&\text{on }\partial\Omega,\\
         u_0=u(x,y,0)&=&g(x,y)&\text{if }(x,y)\in\partial\Omega,\\
         u_0=u(x,y,0)&=&0&\text{otherwise,}         
        \end{array}
\end{equation}
where $\vec{w}=(2y(1-x^2),-2x(1-y^2))$ and $g(1,y)=g(x,0)=g(x,1)=0$ while $g(0,y)=1$.

This is a simple model for studying how the temperature in a cavity with a (constant) ``hot'' external wall ($\{0\}\times [0,1]$) distributes over time. The wind characterized by $\vec{w}$ determines a recirculating flow.

Once again, equation~\eqref{Ex.3_eq} is discretized by means of second order centered finite differences in space and the backward Euler scheme in time.

Thanks to the separability of $\vec{w}$, the spatial discrete operator $\widebar K_2^{\text{cd}}$ has a Kronecker structure and it can be written as in \eqref{convdiff_2d}. However, the presence of the extra terms containing the discrete first order derivative operator does not allow for the memory-saving strategy described in section~\ref{Structured space operators}. Nevertheless, the structure of $\widebar K_2^{\text{cd}}$ can be exploited to easily include the boundary conditions in the matrix equation formulation. Moreover, since the initial condition is equal to the boundary conditions on the boundary nodes and zero otherwise, 
the boundary conditions do not depend on time, and the source term is zero everywhere, the right-hand side of equation~\eqref{Sylv_eq3} can be written as $[\mathbf{u}_0,F_1][e_1,\tau [0,\mathbf{1}_{\ell-1}]^T]^T$ where, with a notation similar to the one used in section~\ref{Imposing the boundary conditions},
$F_1\in\mathbb{R}^{n^2}$ is such that $\mathcal{P}_2(\mathbf{u}_0e_1^T+\tau F_1 [0,\mathbf{1}_{\ell-1}]^T)=\mathcal{L}_2^{\text{cd}}\mathbf{U}$ on the boundary nodes and zero otherwise.
$\mathbf{1}_{\ell-1}\in\mathbb{R}^{\ell-1}$ denotes the vector of all ones.

Therefore, EKSM and RKSM construct the spaces $\mathbf{EK}_m^\square(\widebar K_2^{\text{cd}},[\mathbf{u}_0,F_1])$ and $\mathbf{K}_m^\square(\widebar K_2^{\text{cd}},[\mathbf{u}_0,F_1],\pmb{\xi})$ respectively.

In Table~\ref{tab3.1} we report the results for different values of $n$, $\ell$ and the viscosity parameter $\varepsilon$.

\begin{table}[!ht]
 \centering
 {\small
 \caption{Example \ref{Ex.3}. Results for different values of $n$, $\ell$ and $\varepsilon$.\label{tab3.1}}
\begin{tabular}{|r r r|r r|r r|rr|rr| }
\hline
 & & & \multicolumn{2}{c|}{EKSM}
 & \multicolumn{2}{c|}{RKSM} & \multicolumn{2}{c|}{FGMRES+AGMG}& \multicolumn{2}{c|}{LR-FGMRES+EKSM}\\
 $\varepsilon$ & $n^2$ & $\ell$ & It. & Time (secs) & It. & Time (secs)&  It. & Time (secs)& It. & Time (secs)  \\
 \hline
1& 4096 & 1024 & 13 & 3.977e-1 & 24& 1.186e0 &3 &3.465e1  & 3& 3.014e0 \\

&& 4096 & 14 & 3.459e-1 & 25& 1.322e0 & 3 & 8.375e1  & 2& 1.529e0 \\

&& 16384 & 14 & 8.421e-1 & 23& 1.613e0 & 3 & 2.624e2 & 2& 3.427e0 \\

&& 65536 & 13 & 2.333e0 & 24& 3.908e0 & 3 & 1.484e3 & 2 & 7.437e0 \\

 \hline
&16384 & 1024 & 15 & 2.072e0 & 26& 4.584e0 & 3 & 1.392e2 & 2& 5.995e0 \\

&& 4096 & 18 &2.947e0 & 27& 4.365e0 & 3  & 4.252e2 & 2& 6.847e0 \\

&& 16384 & 19 & 2.830e0 & 28& 5.571e0 &  3 & 1.309e3 &2 &8.445e0  \\

&& 65536 & 18 & 5.208e0 & 28& 7.472e0 & 3 & 6.732e3 &2 & 1.272e1\\

\hline
&65536 & 1024 & 17 & 1.720e1 & 32& 2.709e1 & 3  & 6.752e2 & 2& 3.669e1 \\

&& 4096 & 21 & 2.027e1 & 38& 3.338e1 & 3 & 1.967e3 &2 & 4.613e1 \\

&& 16384 & 24 & 2.426e1 &39 &3.507e1  & 3 & 6.616e3 & 3 & 1.187e2 \\

&& 65536 & 25 & 2.081e1 & 38& 3.552e1 & OoM & OoM & 3 & 1.330e2  \\

\hline
0.1& 4096 & 1024 & 15 & 5.347e-1 & 22& 1.277e0 & 4  & 2.094e1  & 2& 1.291e0 \\

&& 4096 & 14 & 4.380e-1 & 23& 1.174e0 &4  & 6.087e1 & 2& 1.367e0 \\

&& 16384 & 14 & 9.353e-1 & 23 & 1.678e0 & 4 & 2.679e2 &2 &2.662e0  \\

&& 65536 & 13 &2.447e0 & 20& 2.922e0 & 4 & 2.122e3 &2 & 6.083e0 \\

 \hline
&16384 & 1024 & 20 & 2.256e0 & 27& 4.769e0 & 4 & 1.118e2 & 2 & 5.465e0 \\

&& 4096 & 20 & 2.107e0 & 27& 4.605e0 &4  & 3.026e2 & 2& 5.009e0 \\

&& 16384 & 19 &2.977e0 & 24& 4.084e0 & 4 & 1.228e3 & 2 & 6.881e0 \\

&& 65536 & 19 & 5.593e0 & 26& 7.043e0 & 4 & 9.055e3 & 3& 5.314e1 \\

\hline
&65536 & 1024 & 25 & 2.261e1 & 35& 2.821e1 & 4 & 5.370e2 & 3&1.002e2  \\

&& 4096 & 27 & 1.607e1 & 32& 2.261e1 & 4 & 1.604e3 & 3&8.767e1  \\

&& 16384 & 26 & 1.623e1 & 31& 2.492e1 & 4 & 5.667e3 & 3 & 1.023e2  \\

&& 65536 & 25  & 2.062e1 & 30 & 2.417e1 & OoM & OoM & 3& 1.836e2 \\

\hline
0.01& 4096 & 1024 & 10 & 2.126e-1 & 16& 7.507e-1 & 8 & 2.751e1 & 2& 1.055e0 \\

&& 4096 & 9 & 2.509e-1 & 18& 9.415e-1 & 8 & 1.079e2 & 2& 9.823e-1 \\

&& 16384 & 9 & 4.855e-1 & 18& 1.235e0 & 6 & 4.339e2 & 2& 1.778e0 \\

&& 65536 & 10 &1.536e0 & 20& 2.467e0 & 6 & 3.932e3 & 2& 5.878e0 \\

 \hline
&16384 & 1024 & 13 & 1.333e0 & 18& 2.590e0 & 8 & 1.283e2& 2& 4.022e0 \\

&& 4096 & 12 & 1.304e0 & 20& 2.679e0 & 8 & 4.513e2 & 2& 3.841e0 \\

&& 16384 & 12 &1.579e0 & 22& 3.453e0 & 7 & 2.241e3 & 2& 4.876e0 \\

&& 65536 & 12 & 2.951e0 & 20& 4.575e0 & OoM & OoM & 2& 7.809e0 \\

\hline
&65536 & 1024 & 19 & 1.255e1 & 24& 1.508e1 & 9 & 7.083e2 & 2 & 2.823e1 \\

&& 4096 & 18 & 1.166e1 & 25& 1.727e1 & 7 & 1.763e3 & 2&  2.658e1\\

&& 16384 & 17 & 1.261e1 & 25& 1.815e1 & OoM & OoM &2 & 2.662e1 \\

&& 65536 & 17 & 1.393e1 & 22& 1.382e1 & OoM & OoM & 4& 1.448e2 \\

\hline

\end{tabular}
}
 \end{table}

 We can notice that the preconditioner $\mathfrak{P}$ within the FGMRES+AGMG procedure is still effective in reducing the outer iteration count.
 However, it seems its performance  depends on the viscosity parameter $\varepsilon$. Moreover, also for this example the preconditioning step leads to an overall computation time of FGMRES+AGMG that is not competitive when compared to the one achieved by the other solvers. As in Example~\ref{Ex.2}, an OoM message is returned whenever we try to allocate vectors of length $n^2\ell$ for $n^2=\ell=65536$. However, for this example, also for $n^2=16384$, $\ell=65536$, and $n^2=65536$, $\ell=16384$, with the viscosity parameter $\varepsilon=0.01$, the same error message is returned. Indeed, while the system is able to allocate only a moderate number of $n^2\ell$ dimensional vectors, FGMRES+AGMG needs a sizable number of iterations to converge so that the computed basis cannot be stored\footnote{In both cases, we are able to perform six FGMRES+AGMG iterations and the OoM message is returned while performing the seventh iteration. At the sixth iteration, the relative residual norm is $\mathcal{O}(10^{-6})$.}. A restarted procedure may alleviate such a shortcoming.

 LR-FGMRES+EKSM is very competitive in terms of running time as long as very few outer iterations are needed to converge. Indeed, its computational cost per iteration is not fixed but grows quite remarkably as the outer iterations proceed. This is mainly due to the preconditioning step. At each LR-FGMRES iteration $k$, EKSM is applied to an equation whose right-hand side is given by the low-rank matrix that represents the $k$-th basis vector of the computed space and the rank of such a matrix grows with $k$. This significantly increases the computational efforts needed to perform the 10 EKSM iterations prescribed as preconditioning step worsening the performance of the overall solution procedure.
 
 Also for this example, the new routines we propose in this paper perform quite well and the number of iterations mildly depends on $\ell$.
 
 The performances of our solvers are also pretty robust with respect to $\varepsilon$ and, especially for RKSM, it turns out that the number of iterations needed to converge gets smaller as the value of $\varepsilon$ is reduced. In the steady-state setting this phenomenon is well-understood. See, e.g., \cite[Section 4.2.2]{Elman2014}. In our framework, we can explain such a trend by adapting convergence results for RKSM applied to Lyapunov equations. Indeed, in \cite[Theorem 4.2]{Druskin2011a} it is shown how the convergence of RKSM for Lyapunov equations is guided by the maximum value of a certain rational function over the field of values $W(A):=\{z^*Az,\,z\in\mathbb{C}^n,\,\|z\|=1\}$ of the matrix $A$ used to define the employed rational Krylov subspace. Roughly speaking, the smaller $W(A)$, the better. In our context, even though we use $\widebar K_2^{\text{cd}}$ to build $\mathbf{K}_m^\square(\widebar K_2^{\text{cd}},[\mathbf{u}_0,F_1],\pmb{\xi})$, the projection technique involves the whole coefficient matrix $\bigotimes_{i=1}^2(I_n-\mathcal{P}_1)-\tau\overline K_2^{\text{cd}}$ and we thus believe it is reasonable to think that the success of RKSM relies on the field of values of such a matrix.
 In Figure~\ref{fig:3} we plot the field of values of $\bigotimes_{i=1}^2(I_n-\mathcal{P}_1)-\tau\overline K_2^{\text{cd}}$ for $n^2=65536$, $\ell=1024$, and different values of $\varepsilon$ and we can appreciate how such sets are nested and they get smaller when decreasing $\varepsilon$. This may intuitively explains the relation between the RKSM iteration count and $\varepsilon$ but further studies in this direction are necessary.

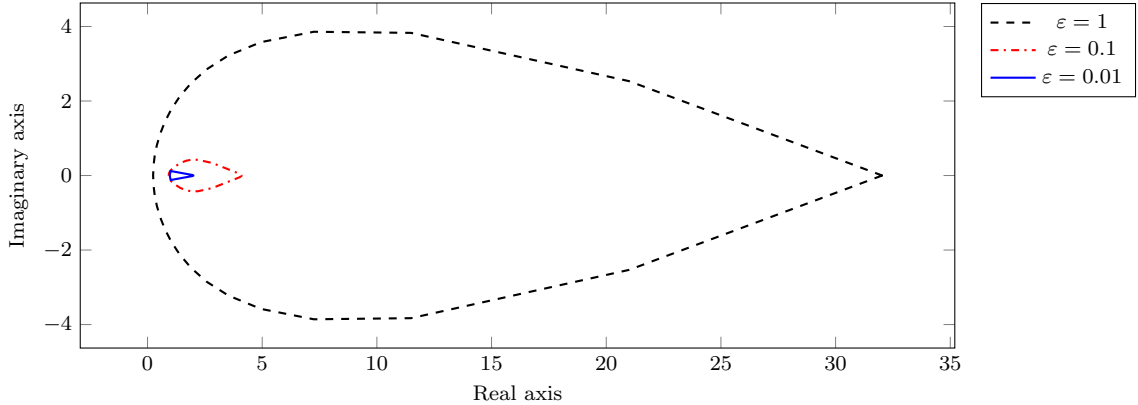
\begin{figure}
  \centering
  \caption{Example~\ref{Ex.3}. Field of values of $\bigotimes_{i=1}^2(I_n-\mathcal{P}_1)-\tau\overline K_2^{\text{cd}}$ for $n^2=65536$, $\ell=1024$ and different $\varepsilon$. } \label{fig:3}
  \begin{tikzpicture}
    \begin{axis}[width=0.8\linewidth, height=.27\textheight,
      legend pos =  outer north east,
      xlabel = Real axis, ylabel = Imaginary axis]
      \addplot+[mark=none, color=black, mark options={solid},thick,dashed] table[x index=0, y index=1]{fieldofvalues1BIS.dat};
      \addplot+[mark=none, color=red, mark options={solid},thick, dash dot] table[x index=0, y index=1]{fieldofvalues01BIS.dat};
      \addplot+[mark=none, color=blue, mark options={solid},thick] table[x index=0, y index=1]{fieldofvalues001BIS.dat};
       \legend{$\varepsilon=1$,$\varepsilon=0.1$,$\varepsilon=0.01$};
    \end{axis}          
  \end{tikzpicture}
\end{figure}
 
 Even though the approach presented in section~\ref{Structured space operators} cannot be adopted in this example, EKSM and RKSM are still very competitive also in terms of storage demand as illustrated in Table~\ref{tab3.2}.

 {\renewcommand{\arraystretch}{1.2}%
\begin{table}[!ht]
 \centering
 \caption{Example \ref{Ex.3}. Memory requirements of the compared methods for different values of $n$, $\ell$ and $\varepsilon$.\label{tab3.2}}
\begin{tabular}{|r r r r r r r| }
\hline
 $\varepsilon$ & $n^2$ & $\ell$ & EKSM
 & RKSM & FGMRES+AGMG& LR-FGMRES+EKSM\\
 
 \hline
1& 4096 & 1024 &56$(n^2+\ell)$  &
50$(n^2+\ell)$& 7$n^2\ell$& 659$(n^2+\ell)$\\

& & 4096 &60$(n^2+\ell)$  &
52$(n^2+\ell)$& 7$n^2\ell$& 324$(n^2+\ell)$\\

&& 16384 &60$(n^2+\ell)$  &
48$(n^2+\ell)$& 7$n^2\ell$& 323$(n^2+\ell)$\\
 
 && 65536 &56$(n^2+\ell)$  &
50$(n^2+\ell)$& 7$n^2\ell$& 234$(n^2+\ell)$\\

\hline

&16384 & 1024 &64$(n^2+\ell)$  &
54$(n^2+\ell)$& 7$n^2\ell$& 325$(n^2+\ell)$\\

&& 4096 &76$(n^2+\ell)$  &
56$(n^2+\ell)$& 7$n^2\ell$& 372$(n^2+\ell)$\\

&& 16384 &80$(n^2+\ell)$  &
58$(n^2+\ell)$&7$n^2\ell$ & 379$(n^2+\ell)$\\

&& 65536 &76$(n^2+\ell)$  &
58$(n^2+\ell)$& 7$n^2\ell$& 332$(n^2+\ell)$\\
 \hline

&65536 & 1024 &72$(n^2+\ell)$  &
66$(n^2+\ell)$& 7$n^2\ell$& 327$(n^2+\ell)$\\

& & 4096 &88$(n^2+\ell)$  &
78$(n^2+\ell)$& 7$n^2\ell$& 402$(n^2+\ell)$\\

&& 16384 &100$(n^2+\ell)$  &
80$(n^2+\ell)$& 7$n^2\ell$& 1102$(n^2+\ell)$\\

& & 65536 &104$(n^2+\ell)$  &
78$(n^2+\ell)$& OoM& 1293$(n^2+\ell)$\\
 \hline

 0.1& 4096 & 1024 &64$(n^2+\ell)$  &
46$(n^2+\ell)$& 9$n^2\ell$& 330$(n^2+\ell)$\\

& & 4096 &60$(n^2+\ell)$  &
48$(n^2+\ell)$& 9$n^2\ell$& 302$(n^2+\ell)$\\

&& 16384 &60$(n^2+\ell)$  &
48$(n^2+\ell)$& 9$n^2\ell$& 259$(n^2+\ell)$\\
 
 && 65536 &56$(n^2+\ell)$  &
42$(n^2+\ell)$& 9$n^2\ell$& 167$(n^2+\ell)$\\

\hline

&16384 & 1024 &84$(n^2+\ell)$  &
56$(n^2+\ell)$& 9$n^2\ell$& 381$(n^2+\ell)$\\

&& 4096 &84$(n^2+\ell)$  &
56$(n^2+\ell)$& 9$n^2\ell$& 362$(n^2+\ell)$\\

&& 16384 &80$(n^2+\ell)$  &
50$(n^2+\ell)$&9$n^2\ell$ & 356$(n^2+\ell)$\\

&& 65536 &80$(n^2+\ell)$  &
54$(n^2+\ell)$& 9$n^2\ell$& 1198$(n^2+\ell)$\\
 \hline

&65536 & 1024 &104$(n^2+\ell)$  &
72$(n^2+\ell)$& 9$n^2\ell$& 955$(n^2+\ell)$\\

& & 4096 &112$(n^2+\ell)$  &
68$(n^2+\ell)$& 9$n^2\ell$& 1108$(n^2+\ell)$\\

&& 16384 &108$(n^2+\ell)$  &
64$(n^2+\ell)$& 9$n^2\ell$& 1213$(n^2+\ell)$\\

& & 65536 &104$(n^2+\ell)$  &
62$(n^2+\ell)$& OoM& 1662$(n^2+\ell)$\\
 \hline

 0.01& 4096 & 1024 &44$(n^2+\ell)$  &
34$(n^2+\ell)$& 17$n^2\ell$& 275$(n^2+\ell)$\\

& & 4096 &40$(n^2+\ell)$  &
38$(n^2+\ell)$& 17$n^2\ell$& 228$(n^2+\ell)$\\

&& 16384 &40$(n^2+\ell)$  &
38$(n^2+\ell)$& 13$n^2\ell$& 160$(n^2+\ell)$\\
 
 && 65536 &44$(n^2+\ell)$  &
42$(n^2+\ell)$& 13$n^2\ell$& 161$(n^2+\ell)$\\

\hline

&16384 & 1024 &56$(n^2+\ell)$  &
38$(n^2+\ell)$& 17$n^2\ell$& 302$(n^2+\ell)$\\

&& 4096 &52$(n^2+\ell)$  &
42$(n^2+\ell)$& 17$n^2\ell$& 279$(n^2+\ell)$\\

&& 16384 &52$(n^2+\ell)$  &
46$(n^2+\ell)$&15$n^2\ell$ & 259$(n^2+\ell)$\\

&& 65536 &52$(n^2+\ell)$  &
42$(n^2+\ell)$& OoM& 168$(n^2+\ell)$\\
 \hline

&65536 & 1024 &80$(n^2+\ell)$  &
26$(n^2+\ell)$& 19$n^2\ell$& 361$(n^2+\ell)$\\

& & 4096 &76$(n^2+\ell)$  &
52$(n^2+\ell)$& 15$n^2\ell$& 334$(n^2+\ell)$\\

&& 16384 &72$(n^2+\ell)$  &
52$(n^2+\ell)$& OoM& 292$(n^2+\ell)$\\

& & 65536 &72$(n^2+\ell)$  &
46$(n^2+\ell)$& OoM& 1659$(n^2+\ell)$\\
 \hline

\end{tabular}
 \end{table}
}

We conclude this example by showing that our routines are also able to identify the physical properties of the continuous solution we want to approximate. In Figure~\ref{fig:2} we report the solution computed by EKSM for the case $n^2=65536$ and $\ell=1024$. In particular, we report the solution at different time steps $t_1$, $t_{\ell/2}$, $t_\ell$ (left to right) and for different values of $\varepsilon$ (top to bottom). We remind the reader that our solution represents the temperature distribution in a cavity with a constant, hot external wall. Looking at Figure~\ref{fig:2}, we can appreciate how the temperature distributes quite evenly in our domain for $\varepsilon=1$. The smaller $\varepsilon$, the more viscous the media our temperature spreads in. Therefore, the temperature is different from zero only in a very restricted area of our domain, close to the hot wall, for $\varepsilon=0.1,0.01$. Notice that for $\varepsilon=0.01$ and $t_1$, the part of the domain where the temperature is nonzero is so narrow that is difficult to appreciate with the resolution of Figure~\ref{fig:2}. For $\varepsilon=0.1,0.01$ we can also see how the temperature stops being evenly distributed as for $\varepsilon=1$ but follows the circulating flow defined by the convection vector $\vec{w}$.

 \begin{figure}[ht!]
  \centering
 \caption{Example~\ref{Ex.3}. Computed solution at different time steps (left to right: $t_1$, $t_{\ell/2}$, $t_\ell$) and related to different values of $\varepsilon$ (top to bottom: $\varepsilon=1$, $\varepsilon=0.1$, $\varepsilon=0.01$). $n^2=65536$, $\ell=1024$.} \label{fig:2}
  
  \includegraphics[scale=0.8]{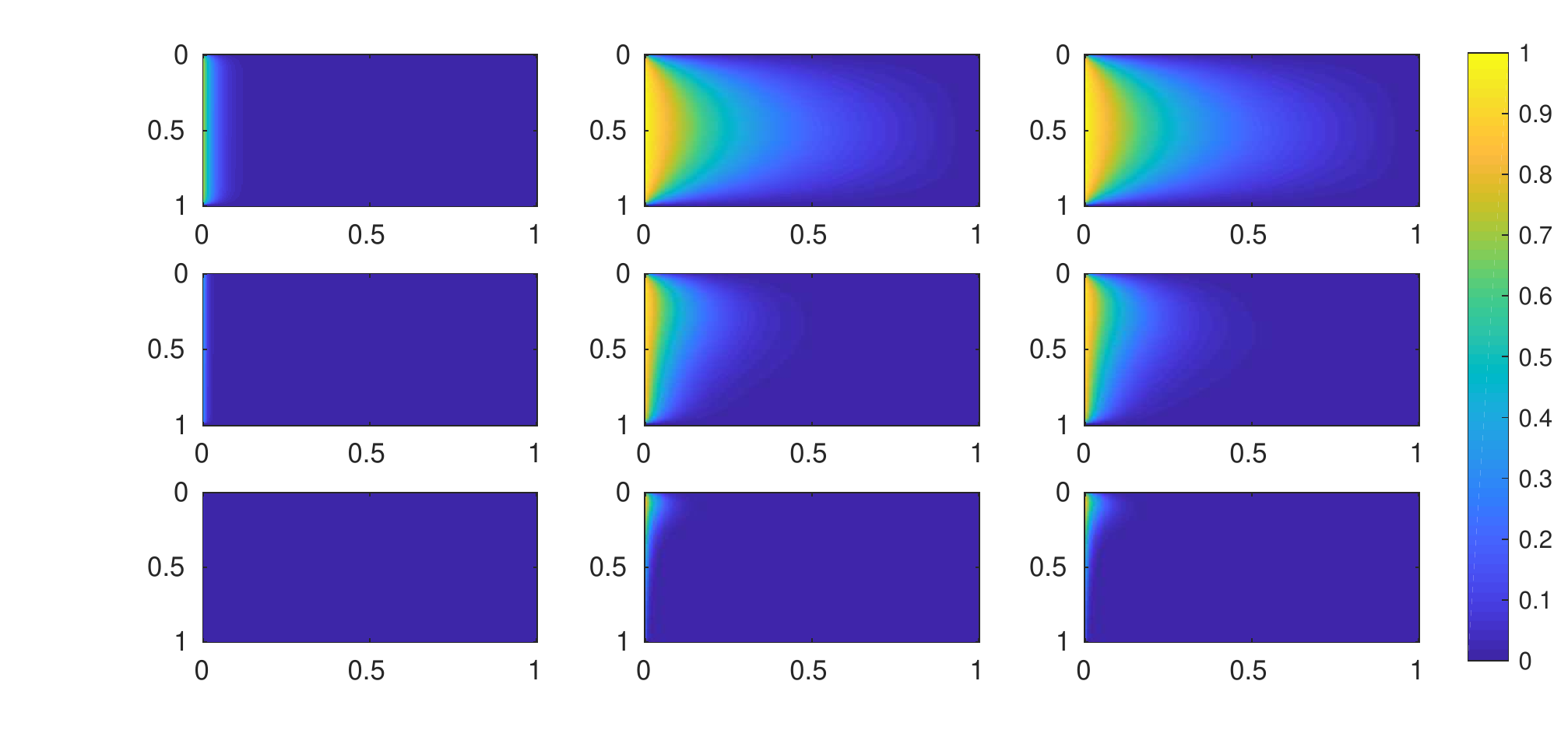}
  
\end{figure}

 }
\end{num_example}

\begin{num_example}\label{Ex.4}
{\rm
For the last example, we take inspiration from \cite[Example 5]{Palitta2016} and consider the following 3D time-dependent convection-diffusion equation
\begin{equation}\label{Ex.4_eq}
 \begin{array}{rlll}
         u_t-\Delta u + \vec{w}\cdot\nabla u&=&0,&  \text{in }\Omega\times (0,1],\;\Omega:=(0,1)^3,\\
         u&=&0,&\text{on }\partial\Omega,\\
         u_0&=&g,&\\
        \end{array}
\end{equation}
where $\vec{w}=(x\sin x,y\cos y, e^{z^2-1})$ and $g$ is such that 
\begin{equation}\label{equation_g}
 \begin{array}{rlll}
         -\Delta g + \vec{w}\cdot\nabla g&=&1,&  \text{in }\Omega,\\
         g&=&0,&\text{on }\partial\Omega.\\
        \end{array}
\end{equation}
Both \eqref{Ex.4_eq} and \eqref{equation_g} are discretized by centered finite differences in space and the backward Euler scheme is used for the time integration of \eqref{Ex.4_eq}. Once \eqref{equation_g} is discretized, we compute a numerical solution $\mathbf{g}\in\mathbb{R}^{n^3}$ by applying the strategy presented in, e.g., \cite{Palitta2016}, and then set $\mathbf{u}_0=\mathbf{g}$. 

Also in this example the convection vector $\vec{w}$ is a separable function in the space variables and the stiffness matrix $\widebar K_3^{\text{cd}}\in\mathbb{R}^{n^3\times n^3}$ can be written in terms of a Kronecker sum as illustrated in section~\ref{The convection-diffusion equation}. However, the initial value $\mathbf{u}_0$ is not separable in general and we have to employ $\mathbf{EK}_m^\square(\widebar K_3^{\text{cd}},\mathbf{u}_0)$ and $\mathbf{K}_m^\square(\widebar K_3^{\text{cd}},\mathbf{u}_0,\pmb{\xi})$
as approximation spaces.

It is well-known how 
sparse direct routines are not very well suited for solving linear systems with a coefficient matrix that stems from the discretization of a 3D differential operator, and
iterative methods perform better most of the time.
Therefore, the inner-outer GMRES method is employed 
to solve the linear systems involved
in the basis construction of both $\mathbf{EK}_m^\square(\widebar K_3^{\text{cd}},\mathbf{u}_0)$ and $\mathbf{K}_m^\square(\widebar K_3^{\text{cd}},\mathbf{u}_0,\pmb{\xi})$.
We set the tolerance on the relative residual norm for such linear systems equal to $10^{-8}$, i.e., two order of magnitude less than the outer tolerance. However, the novel results about inexact procedures in the basis construction of the rational and extended Krylov subspace presented in \cite{Kuerschner2018} may be adopted to  further reduce the computational cost of our schemes.

Due to the very large number $n^3\ell$ of DoFs we employ, in Table~\ref{tab4.1} we report only the results  for EKSM and RKSM.

{\renewcommand{\arraystretch}{1.2}%
\begin{table}[!ht]
 \centering
 \caption{Example \ref{Ex.4}. Results for different values of $n$ and $\ell$.\label{tab4.1}}
\begin{tabular}{|r r |r r r|r r r| }
\hline
 & & \multicolumn{3}{c|}{EKSM}
 & \multicolumn{3}{c|}{RKSM} \\
   $n^3$ & $\ell$ & It. & Time (secs) & Mem. & It. & Time (secs)&  Mem.   \\
 \hline
 32768 & 1024 & 10 & 1.026e1  & $22(n^3+\ell)$ & 12  &5.158e0  & $13(n^3+\ell)$\\
 & 4096 & 10  & 1.029e1  & $22(n^3+\ell)$  & 13 & 6.121e0 &$14(n^3+\ell)$ \\
 & 16384 & 10 & 1.705e1 & $22(n^3+\ell)$ & 13 & 5.479e0 & $14(n^3+\ell)$\\
& 65536 & 10  & 2.371e1  & $22(n^3+\ell)$ & 12 & 5.385e0 & $13(n^3+\ell)$\\

 \hline

 262144 & 1024 & 12  & 8.367e1  & $26(n^3+\ell)$ & 15 &4.378e1  & $16(n^3+\ell)$\\
 & 4096 & 13 & 9.287e1 & $28(n^3+\ell)$ & 16 & 4.326e1 & $17(n^3+\ell)$\\
 & 16384 & 13 & 9.109e1  & $28(n^3+\ell)$ & 15 & 4.296e1 & $16(n^3+\ell)$\\
& 65536 & 12 & 1.595e2 & $28(n^3+\ell)$ & 15 & 4.356e1 & $16(n^3+\ell)$\\

\hline

 2097152 & 1024 & 16 & 1.143e3 & $34(n^3+\ell)$ & 18 & 4.631e2  & $19(n^3+\ell)$\\
 & 4096 & 18 & 1.293e3 & $38(n^3+\ell)$ & 19 & 4.855e2 & $20(n^3+\ell)$ \\
 & 16384 & 18 & 1.298e3 & $38(n^3+\ell)$ & 18 & 4.541e2 & $19(n^3+\ell)$\\
& 65536 & 17 & 1.237e3 & $36(n^3+\ell)$ & 16 & 3.915e2 & $17(n^3+\ell)$ \\

\hline

\end{tabular}
 \end{table}
}
We can appreciate how our routines need a very reasonable time to meet the prescribed accuracy while maintaining a moderate storage consumption. For instance, the finest space and time grids we consider lead to a problem with $\mathcal{O}(10^{11})$ DoFs and RKSM manages to converge in few minutes by constructing a very low dimensional subspace.

It is interesting to notice how the computational time of RKSM is always much smaller than the one achieved by EKSM. This is due to the difference in the time devoted to the solution of the linear systems during the basis construction. Indeed, in RKSM, shifted linear systems of the form $\widebar K_3^{\text{cd}}-\xi_jI$ have to be solved and, in this example, it turns out that GMRES is able to achieve the prescribed accuracy in terms of relative residual norm in much fewer iterations than what it is able to do when solving linear systems with the only $\widebar K_3^{\text{cd}}$ as it is done in EKSM.

}
\end{num_example}

\section{Conclusions}\label{Conclusions}

In this paper we have shown how the discrete operator stemming from the discretization of time-dependent PDEs can be described in terms of a
matrix equation. For sake of simplicity, we have restricted our discussion to the heat equation and evolutionary convection-diffusion equations, but the same strategy can be applied to any PDE of the form $u_t+\mathfrak{L}(u)=f$ whenever $\mathfrak{L}(u)$ is a linear differential operator involving only space derivatives, provided certain assumptions on the source term $f$ and the boundary conditions are fulfilled. 

The matrix equation formulation of the discrete problem naturally encodes the separability of the spatial and time derivatives of the underlying differential operator. This lets us employ different strategies to deal with the spatial and time components of the algebraic problem and combine them in a very efficient solution procedure. In particular, state-of-the-art projection techniques have been proposed to tackle the spatial operator while the entry-wise structure of the time discrete operator has been exploited to derive effective solution schemes.

We have shown how to fully exploit the possible Kronecker structure of the stiffness matrix. Very good results are obtained also when this structure is not capitalized on in the solution process. 
This means that our approach can be successfully applied also to problems which do not lead to a stiffness matrix that possesses a Kronecker form as, e.g., in case of spatial domains $\Omega$ with a complex geometry or when sophisticated discretization methods (in space) are employed. We believe that also elaborate space-time adaptive techniques \cite{Deuflhard2012,Lang2001} can benefit from our novel approach. In particular, our routines can be employed to efficiently address the linear algebra phase within adaptive schemes for fixed time and space grids. Once the grids have been modified, our solvers can deal with the discrete operator defined on the newly generated time-space meshes. Both EKSM and RKSM can be easily implemented and we believe they can be incorporated in state-of-the-art software packages like, e.g., KARDOS \cite{ErdmannLangRoitzsch2002}.

As already mentioned, in the proposed approach the time step size $\tau$ is assumed to be fixed. We plan to extend our algorithm to the case of
adaptive time-stepping discretization schemes in the near future.

\section*{Acknowledgments}
We wish to thank Peter Benner, Jens Saak and Valeria Simoncini for insightful
comments on earlier versions of the manuscript. Their helpful suggestions are greatly appreciated. We also thank Jennifer Pestana for some observations on the preconditioning operator $\mathfrak{P}$. 

 The author is a member of the Italian INdAM Research group GNCS.

\bibliography{EvolPDEs}

\end{document}